\pgfplotsset{width=5.5cm,compat=1.9}
\newcommandx{\unsure}[2][1=]{\todo[linecolor=red,backgroundcolor=red!25,bordercolor=red,#1]{#2}}
\newcommandx{\change}[2][1=]{\todo[linecolor=blue,backgroundcolor=blue!25,bordercolor=blue,#1]{#2}}
\newcommandx{\info}[2][1=]{\todo[linecolor=green,backgroundcolor=green!25,bordercolor=green,#1]{#2}}
\newcommandx{\improvement}[2][1=]{\todo[linecolor=yellow,backgroundcolor=yellow!25,bordercolor=yellow,#1]{#2}}
\newtheorem{theorem}{\textbf{Theorem}}[section]
\newtheorem{remark}[theorem]{\textbf{Remark}}
\newtheorem{lemma}[theorem]{\textbf{Lemma}}
\newtheorem{acknowledgement}{\textbf{Acknowledgement}}
\numberwithin{equation}{section}
\titleformat\section{}{}{0pt}{\Large\scshape\filcenter\thesection{} - }
\def\aa{\alpha}
\def\ll{\lambda}
\def\O{\Omega}
\def\th{\theta}
\def\ee{\varepsilon}
\newcommand{\RR}{ \mathbb{R}}
\newcommand{\ZZ}{ \mathbb{Z}}
\newcommand{\EE}{{\mathbb E}}
\newcommand{\cF}{{\mathcal F}}
\newcommand{\cG}{{\mathcal G}}
\newcommand{\cL}{{\mathcal L}}
\newcommand{\cP}{{\mathcal P}}
\newcommand{\cR}{{\mathcal R}}
\newcommand{\cT}{{\mathcal T}}
\newcommand{\cV}{{\mathcal V}}
\newcommand{\cZ}{{\mathcal Z}}
\newcommand{\ati}{{\widetilde a}}
\newcommand{\ft}{{\widetilde f}}
\newcommand{\Vt}{{\widetilde V}}
\newcommand{\ptt}{{\partial_{t}}}
\newcommand{\norminf}[2][]{{\left\| #2 \right\|_{\infty}^{#1}}}
\DeclareMathOperator{\divo} {div}
\newcommand{\lp}{\left(}
\newcommand{\rp}{\right)}
\newcommand{\lc}{\left\{}
\newcommand{\rc}{\right\}}
\newcommand{\lb}{\left[}
\newcommand{\rb}{\right]}
\newcommand{\labs}{\left|}
\newcommand{\rabs}{\right|}
\newcommand{\ds}{\displaystyle}
\title{Mean Field Games of Controls: Finite Difference Approximations}
\author{Y. Achdou \thanks{Univ. Paris Diderot, Sorbonne Paris Cit{\'e}, Laboratoire Jacques-Louis Lions, UMR 7598, UPMC, CNRS, F-75205 Paris, France.
 achdou@ljll.univ-paris-diderot.fr}, Z. Kobeissi \thanks {Univ. Paris Diderot, Sorbonne Paris Cit{\'e}, Laboratoire Jacques-Louis Lions, UMR 7598, UPMC, CNRS, F-75205 Paris, France.
 zkobeissi@math.univ-paris-diderot.fr}}
\begin{document}
\maketitle

\begin{abstract}
  We consider a class of mean field games in which the agents interact through both their states and controls,
  and we focus on situations in which a generic agent tries to adjust her speed (control) to an average speed (the average is made in a neighborhood in the state space). In such cases, the monotonicity assumptions that are frequently made in the theory of mean field games do not hold, and uniqueness cannot be expected in general.
  
  Such model lead to systems of forward-backward nonlinear nonlocal parabolic equations; the latter are supplemented with various kinds of boundary conditions, in particular Neumann-like boundary conditions stemming from  reflection conditions on the underlying controled stochastic processes.

  The present work deals with numerical approximations of the above mentioned systems.  After describing the finite difference scheme, we propose an iterative method  for solving the systems of nonlinear equations that arise in the discrete setting; it combines a continuation method, Newton iterations and inner loops of a bigradient like solver.

 The numerical method is used for simulating two examples. We also make experiments on the  behaviour of the iterative algorithm when the parameters of the model vary.
  
\end{abstract}
\section{Introduction}
\label{sec:introduction}
The theory of  mean field games, ({\sl MFGs} for short), aims at studying deterministic or stochastic differential
games (Nash equilibria) as the number of agents tends to infinity. It supposes that the rational agents are  indistinguishable and  individually have a negligible influence on the game, and that each individual strategy is influenced by some averages of quantities depending on the states (or the controls as in the present work) of the other agents.  MFGs have been introduced in the pioneering  works of J-M. Lasry and P-L. Lions \cite{MR2269875,MR2271747,MR2295621}. Independently and at approximately  the same time, the notion of mean field games arose in the engineering literature, see the works of  M.Y. Huang, P.E. Caines and R.Malham{\'e} \cite{MR2352434,MR2346927}. 

The present work deals with numerical approximations of mean field games in which the agents interact through both their states and controls; it follows a more theoretical work by the second author, \cite{2019arXiv190411292K}, which is devoted to the mathematical analysis of the related systems of nonlocal partial differential equations.   There is not much  literature on MFGs  in which the agents also interact through their controls, see
 \cite{MR3160525,MR3112690,MR3805247,MR3752669,2019arXiv190205461F,2019arXiv190411292K}.
To stress the fact that the latter situation is considered,  we will sometimes use the terminology {\sl mean field games of control} and the acronym {\sl MFGC}.

The MFGC that is considered in the present work is described by the following system of nonlocal partial differential equations:
\begin{equation}
    \label{eq:MFGC}
    \lc
    \begin{aligned}
            &-\ptt u
            - \nu\Delta u
            +H\lp x,\nabla_xu(t,x),\mu(t)\rp
            = f(x,m(t))
            &\text{ in }
            [0,T)\times\Omega,
            \\
            & \ptt m
            - \nu\Delta m
            -\divo\lp H_p\lp x,\nabla_xu(t,x),\mu(t)\rp m\rp
            =0
            &\text{ in }
            (0,T]\times\Omega,
            \\
            &\mu(t)
            = \Bigl(
            I_d,
            -H_p\lp \cdot,\nabla_xu(t,\cdot),\mu(t)\rp
            \Bigr){\#}m(t)
            &\text{ in } [0,T],
            \\
            &u(T,x)
            =\phi(x)
            &\text{ in }
            \Omega,\\
            &m(0,x)
            =m_0(x)
            &\text{ in }
            \Omega.
        \end{aligned}
        \right.
\end{equation}
Here, the state space $\Omega$ is a bounded domain of $\RR^d$ with a piecewise smooth boundary, while the controls are vectors of $\RR^d$. 
  The time horizon is $T>0$, and the parameter $\nu>0$ is linked to the level of noise in the trajectories (the dynamics of a given agent is described by 
$dX_t= \alpha_t dt +\sqrt{2\nu} dB_t$ where $(B_t)$ is a standard $d$-dimensional Brownian motion and $\alpha_t\in \RR^d$ is the control at time $t$).  A special feature of the present model is that the third argument of the Hamiltonian $H$ is a probability measure  $\mu(t)\in \cP\lp\Omega\times\RR^d\rp$, which stands for the joint probability  of the states and optimal controls of the agents at time $t$.
In (\ref{eq:MFGC}),   $u: [0,T]\times \Omega \to \RR$ and  $m : [0,T]\times \Omega \to \RR_+$  respectively stand for 
the value function of a representative agent and the distribution of states. The first, respectively second line in (\ref{eq:MFGC}) is the  Hamilton-Jacobi-Bellman equation ({\sl HJB} for short) which leads to  the optimal control of a representative agent, respectively  the Fokker-Planck-Kolmogorov equation ({\sl FP}  for short) which describes the transport-diffusion of the distribution of states by the optimal control law. The HJB equation is a backward w.r.t. time parabolic equation and is supplemented with a condition at $t=T$ which involves the terminal cost function $\phi: \Omega\to \RR$, whereas the FP equation is forward w.r.t. time and is supplemented with an initial condition, which translates the fact that the initial distribution of states is known. The HJB equation also involves the so-called {\sl coupling } function $f:\Omega\times \RR_+\to \RR$,   or, in other words,  $f(X_t, m(t, X(t)))$ is part of the running cost of a representative agent. We shall discuss later the boundary conditions on $(0,T)\times \partial \Omega$ associated with the  first two equations in (\ref{eq:MFGC}).
  The third equation in (\ref{eq:MFGC}) gives the connection between $\mu(t)$ and $m(t)$, and shows in particular that $\mu(t)$ is supported by a $d$-dimensional geometrical object, namely the graph of the feedback law: $\Omega\to \RR^d$, $x\mapsto - H_p (x, \nabla u (t, x),\mu(t))$. 

\subsection{A brief discussion on the mathematical analysis of (\ref{eq:MFGC})}\label{sec:brief-disc-math}
Recall that the Hamiltonian of the problem is $(x,p,\mu)\mapsto H(x,p,\mu)$, $(x,p,\mu)\in \Omega\times \RR^d \times \cP\lp\Omega\times\RR^d\rp$.

From the viewpoint of mathematical analysis, a priori estimates for (\ref{eq:MFGC}) are more difficult to obtain than in the case when the agents interact only via the distribution of states $m$. Indeed, in the latter case, if for example the costs $f$ and $\phi$ are uniformly bounded, then a priori estimates on $\norminf{u}$ stem from the maximum principle for second-order parabolic equations. By contrast, since the Hamiltonian in (\ref{eq:MFGC}) depends 
non-locally on $\nabla_x u$, the maximum principle applied to the HJB equation only permits to bound  $\norminf{u}$ by a quantity  which depends (quadratically under standard assumptions on $H$) on $\norminf{\nabla_xu}$, and this information may be useless without additional arguments.

  If the agents interact only through the distribution of states and if the Hamiltonian depends separately on $p$ and $m$, a natural assumption  is that the latter is monotone with respect to $m$, see \cite{MR2269875,MR2271747,MR2295621}; it implies existence and uniqueness of solutions, see in \cite{Lions_video,MR2295621}. Such an assumption is quite sensible in many situations, since  it models the  aversion of the agents to highly crowded regions of the state space. It is possible to extend these arguments to MFGCs, see \cite{MR3752669} for a probabilistic point of view and \cite{MR3805247} for a PDE point of view,  and the monotonicity assumption then means that the agents favor controls that are opposite to the main stream. In \cite{2019arXiv190411292K} and in the present work, we prefer to avoid such an  assumption, because it is generally not satisfied, at least in models of crowd motion: indeed, in models of traffic or pedestrian flows, a generic agent would rather try to adjust her speed (control) to the average speed in a neighborhood  of her position. 

 The third equation in (\ref{eq:MFGC}) can be seen as a fixed point problem for $\mu$ given $u$ and $m$, which turns to be well-posed under  
the Lasry-Lions monotonicity assumption adapted to MFGC, provided that $u$ and $m$ are smooth enough.  We shall replace this assumption by a 
new structural condition which has been introduced in  \cite{2019arXiv190411292K},  namely that $H_p$ depends linearly on the variable $\mu$ and is a contraction with respect to  $\mu$  (using a suitable distance on probability measures). In the context of crowd motion, this structural condition 
is satisfied if the representative agent targets controls that are proportional to an average of the controls chosen by the other agents nearby, with a positive proportionality coefficient smaller than one. Were this coefficient equal to or larger than one,  it would be  easy to cook up examples 
in which there is no solution to (\ref{eq:MFGC}) or even to the $N$-agent game, see Remark  \ref{sec:simpl-one-shot-1} below.

In \cite{2019arXiv190411292K}, the focus is put on the existence of solutions rather than on  their 
uniqueness. Indeed, without the monotonicity condition, uniqueness  is unlikely in general if $T$ is not small. Consider for example 
a game in which the   function   $\phi$ has   two perfectly symmetrical minima (the targets), $f=0$ and where the initial distribution of states 
has the same symmetry. If $H$ depends on $|\nabla u(t,x)|$ only (no interaction through the controls),
 then a representative agent will simply travel to the minimum which is  closest to her initial position. On the contrary, if the representative agent  favors a control close to the average one, then there are at least two symmetrical solutions in which the whole distribution moves toward one of the two minima. 

Going back to existence results, it is now frequent in the MFG literature to obtain energy estimates by testing  the HJB equation by $m$,  the FP equation by $u$, summing the resulting equations, integrating in the time and state variables then making suitable integrations by parts. If the value function is uniformly bounded from below (which is often the case even if there are  interactions through the controls),
this results in a relationship between the $L^{\infty}\lp [0,T];L^1\lp\Omega\rp\rp$-norm
of the positive part of $u$ and the $L_m^2\lp[0,T]\times\Omega;\RR^d\rp$-norm of $\nabla_xu$;
this observation can then be used to obtain additional a priori estimates, which may be  combined with the ones  obtained from the maximum principle
and discussed above, and finally with Bernstein method.  This strategy has been implemented in \cite{2019arXiv190411292K} and leads to the existence of a solution to (\ref{eq:MFGC}) under suitable assumptions. By and large, existence was proved in \cite{2019arXiv190411292K} for periodic problems in any of the following cases:
\begin{itemize}
    \item short time horizon
    \item monotonicity 
    \item small enough parameters (in particular the contraction factor mentioned above, see the parameters $\lambda$ and $\theta$ in (\ref{eq:HJB})-(\ref{eq:defZ}) below)
    \item weak dependency of the average drift  on the state variable
\end{itemize}
Note that  in \cite{carmona2015}, existence and uniqueness have been proved with probabilistic arguments in the case where the Hamiltonian depends separately on $p$ and $\mu$.

\subsection{A more detailed description of the considered class of MFGCs}\label{sec:more-details-h}
For any $x\in \partial \Omega$, let $n(x)$ be the outward pointing unit normal vector to $\partial \Omega$ at $x$.
The dynamics  of a representative agent is given by 
\begin{equation}
    dX_t    =    \aa_tdt    +\sqrt{2\nu}dB_t -  2\nu n(X_t) dL_t,\quad  X_t\in \overline \Omega,\quad    0\leq t \leq T,
\end{equation}
where $\lp B_t\rp_{t\in\lb 0,T\rb}$ is a  standard $d$-dimensional Brownian motion,  $(\aa_t)_{t\in [0,T]}$ is the  control, a stochastic process adapted to $(B_t)$ with values in $\RR^d$,  $L_t$ is the local time of the process $X_t$ on $\partial \Omega$.
We assume that $X_{|t=0}$ is a random variable in $\overline \Omega$, independent of $(B_t)$ for all $t>0$,
whose law is  $ \cL\lp X_{|t=0}\rp=m_0$. 
\\
In what follows, the part of the running cost which models the  interactions via the controls will involve
 an average drift  $V\in \RR ^d$:
\begin{equation}
    V(t,x)
    =\frac 1 {Z(t,x) }
    \int_{(y,\alpha) \in \Omega\times\RR^d}
    \aa K(x,y)d\mu(t,y,\aa),
\end{equation}
for $(t,x)\in[0,T]\times\Omega$, where $K$ is a nonnegative kernel, $Z(t,x)=\int_{\Omega}K(x,y)dm(t,y)$
is a normalization factor, $\mu(t,\cdot,\cdot)$ is the law of the joint distribution of the states and the controls, and $m(t,\cdot)$ is the law of the distribution of states. 
\\
Hereafter, we are going to focus in MFGCs in which the cost to be minimized by a representative agent is
\begin{equation}
    \label{eq:cost}
    J(\aa)
    =
    \EE\lb
    \int_0^T \Bigl(
    \frac{\th}2\labs\aa_t-\ll V(t,X_t)\rabs^2
    +\frac{1-\th}2\labs\aa_t\rabs^2
    +cm(t,X_t) +f_0(X_t)\Bigr) dt
    +\phi(X_T)
    \rb,
\end{equation}
where $\ll,\th$ are  real numbers such that
$-1<\ll<1$ and $0<\th\leq 1$. This leads us to define the Lagrangian
\begin{equation}
  \label{eq:1}
    L(\aa,V)
    =
    \frac{\th}2\labs\aa-\ll V\rabs^2
    +\frac{1-\th}2\labs\aa\rabs^2,
    \qquad
    \lp \aa,V\rp\in\RR^d\times\RR^d,
\end{equation}
which is convex with respect to $\alpha$, and its Legendre transform (with respect to $\alpha$):
\begin{equation}
  \label{eq:2}
    H(p,V)
    =
    \frac12\labs p-\ll\th V\rabs^2
    -\frac{\ll^2\th}2\labs V\rabs^2,
    \qquad
    \lp p,V\rp\in\RR^d\times\RR^d.
\end{equation}
With this definition of the running cost,  the first three lines of  (\ref{eq:MFGC}) can be written as follows:
    \begin{align}
    \label{eq:HJB}
        &-\ptt u
        - \nu\Delta u
        + \frac12 \labs \nabla_xu-\ll\th V\rabs^2
        -\frac{\ll^2\th}2\labs V\rabs^2
        = cm+f_0(x),\quad &\text{ in }
            [0,T)\times\Omega,
        \\
        \label{eq:FPK}
        & \ptt m
        - \nu\Delta m
        -\divo\lp \lp \nabla_xu
        -\ll\th V\rp m\rp
        =
        0,\quad &\text{ in }
            (0,T]\times\Omega,
        \\
        \label{eq:defV}
        &V(t,x)
        =
        -    \frac 1 {Z(t,x)}\int_{\Omega}
        \lp\nabla_xu(t,y)-\ll\th V(t,y)\rp K(x,y)
dm(t,y),  &\text{ in }
            (0,T)\times\Omega,
        \\
        \label{eq:defZ}
        &Z(t,x)
        =
        \int_{\Omega}
        K(x,y)dm(t,y),
 &\text{ in }
            [0,T)\times\Omega,
            \\
\label{eq:3}            &u(T,x)
            =\phi(x),
            &\text{ in }
            \Omega, 
\\ \label{eq:4}
            &m(0,x)
            =m_0(x),
            &\text{ in }
            \Omega.
\end{align}
We can now specify the boundary conditions on $(0,T)\times \partial \Omega$.
First, assuming that $u$ is smooth enough, the optimal control of a representative player is given by the feedback law:
\begin{displaymath}
  \alpha^* (t, x) = - \Bigl(\nabla_x u (t, x)-\lambda \theta V(t,x)\Bigr).
\end{displaymath}
The reflection condition at the boundary translates  into the Neumann boundary conditions:
\begin{equation}
      \label{eq:BCNeuu}
    \frac{\partial u}{\partial n}(t,x)=0,
%    \ll\th V(t,x)\cdot n(x),
    \qquad \text{ on } [0,T)\times\partial\Omega.
\end{equation}
Now, from the definition of $m$,  $\ds \int_\Omega m(t,x)\psi(x) dx = \EE\left(\int_\Omega m_0(x) \psi(X_{t,x}) dx\right)$ for all smooth enough test-function $\psi$  such that $ \frac{\partial \psi}{\partial n}(x)=0$ on $\partial \Omega$. Taking the time derivative of the latter equality, and using (\ref{eq:BCNeuu}), we deduce that 
\begin{equation}
  \label{eq:BCNeum}
   \nu \frac{\partial m}{\partial n}(t,x) -\lambda\theta  m (t,x) V(t,x)\cdot n(x)
    =
    0,
    \qquad \text{ on } (0,T]\times\partial\Omega.
  \end{equation}
  Note that (\ref{eq:FPK}), (\ref{eq:BCNeuu}) and (\ref{eq:BCNeum}) imply that the total mass $\int_\Omega m(t,x)dx$ is conserved.

We have studied or will sometimes consider other kinds of boundary conditions on some parts of the boundary, for example:
\begin{itemize}
\item periodic conditions, i.e. we set (\ref{eq:HJB})-(\ref{eq:4})) in the flat  torus $\Omega=\RR^d/\ZZ^d$
\item At a part of the boundary standing for an exit, there is an exit cost.  This yields a Dirichlet condition on $u$.
The Dirichlet condition on $m$: $m=0$ at exits, means that the agents stop taking part in the mean field game as soon as they reach the exit.
Such boundary  conditions will be simulated numerically in paragraph ~\ref{sec:second-example} below.
\item  If a part of the boundary stands for an entrance, then it is natural to impose a Dirichlet condition on $u$ to prevent the agents from  crossing the entrance outward, and a flux-like condition on $m$ to specify the entering flux of agents, see also  paragraph ~\ref{sec:second-example} below.
\end{itemize}

\subsection{Organization of the paper}
\label{sec:organization-paper}

Section \ref{sec:finite-diff-meth} is devoted to the description of the finite difference scheme; it is based on a monotone upwind scheme for the HJB  equation, and a scheme for the FP equation which is obtained by differentiating the discrete HJB equation and taking the adjoint. It is an extension of the finite difference schemes proposed and studied by the first author and I. Capuzzo-Dolcetta in \cite{MR2679575,MR3135339} for simpler MFGs.
Designing an efficient strategy for solving the system of non linear equations arising in the discrete version 
of MFGCs is a challenging task, in particular because 
\begin{itemize}
\item the underlying system of PDEs is forward-backward and cannot be solved by simply marching in time 
\item there is no underlying variational structure
\item Equation (\ref{eq:defV}) is non local.
\end{itemize}
 In Section~\ref{sec:newt-algor-solv}, we propose a strategy for solving the system of non linear equations:
it is a continuation method in which the viscosity parameter (for instance) is progressively decreased to the desired value, and for each value of the latter parameter,
the system of non linear equations is solved by a Newton method with inner iterations of a bi-gradient like algorithm.

Finally, in Section \ref{sec:numer-simul}, we discuss the results of numerical  simulations in two cases. In the first example, we show in particular that there exist multiple solutions and we discuss how the iteration count of the solver is affected by the variation of the parameters in the model. The second case is a model for a crowd of agents crossing a rectangular hall from an entrance to an exit:  we consider situations in which queues occur, and we show how the solution  depends on the parameters.

\section{Finite difference methods}\label{sec:finite-diff-meth}
In order to approximate (\ref{eq:HJB})-(\ref{eq:4})  numerically, we propose  a  finite difference  method reminiscent of that introduced in \cite{MR2679575, MR3135339} for MFGs without coupling through the controls. The important features of such methods are as follows
\begin{itemize}
\item they are based on monotone finite difference schemes for (\ref{eq:HJB}). Hence, a comparison principle still holds at the discrete level.
\item the special structure of  (\ref{eq:HJB})-(\ref{eq:FPK}) is preserved at the discrete level, namely that the FP equation can be obtained by differentiating the HJB equation w.r.t. $u$ and by taking the adjoint of the resulting equation. This results in a monotone approximation of (\ref{eq:FPK}), which ensures that the discrete version  of $m$ remains non negative at all time if $m_0 $ is non negative. The discrete FP equation will also preserve mass.
%\item both schemes for  (\ref{eq:HJB}) and (\ref{eq:FPK}) are essentially implicit, 
\end{itemize}
To simplify the discussion, let us focus on the case when $d=2$ and $\Omega=(0,1)^2$.
  More complex domains (even with holes) can be handled by the present method, but an additional effort would be needed if the domain had boundaries not aligned with the axes of the underlying grid that will be introduced soon. We also assume for simplicity that the boundary conditions 
are of the type (\ref{eq:BCNeuu})-(\ref{eq:BCNeum}) on the whole $\partial \Omega$.

\subsection{Notations and definitions}
\label{sec:notations}

For two positive integers $N_T, N_h$,
we set $\Delta t=T/N_T$, the time step,
and $h=1/N_h$, the  step related to the state variables. Consider the set of discrete times
$\cT_{\Delta t}=\lc t_k=k\Delta t, k=0,\dots,N_T\rc$
and the  grid $\Omega_{h}=\lc x_{i,j}=(ih,jh), i,j=0,\dots,N_h\rc$.

The goal will be to approximate $u\lp t_k,x_{i,j}\rp$ and $m\lp t_k,x_{i,j}\rp$ respectively by 
$u^k_{i,j}$ and $m^k_{i,j}$,  for all $k\in \{0,\dots, N_T\}$ and $(i,j)\in \{0,\dots, N_h\}^2$,
by solving the discrete version of (\ref{eq:HJB})-(\ref{eq:BCNeum}) arising from the finite difference scheme.\\

Let us define the discrete time derivative of $y:\cT_{\Delta t}\times\Omega_h\rightarrow\RR$ by the collection of real numbers:
\begin{equation}
    D_ty^{k}_{i,j}
    =
    \frac{y^{k+1}_{i,j}-y^k_{i,j}}{\Delta t},
\end{equation}
for $k=0,\dots, N_T-1$, $i,j=0,\dots,N_h$.

Given a grid function $y:\Omega_h\rightarrow\RR$,
we introduce the first order right sided  difference operators
\begin{equation}
    \label{eq:defFDO}
    \begin{aligned}
        \lp D_1^+y\rp_{i,j}
        &=
        \frac{y_{i+1,j}-y_{i,j}}{h},\\
        \lp D_2^+y\rp_{i,j}
        &=
        \frac{y_{i,j+1}-y_{i,j}}{h},
      \end{aligned}
    \end{equation}      
    and the five point  discrete laplace operator:
    \begin{equation}
   \label{eq:5}
\lp\Delta_h y\rp_{i,j}
       =
        -\frac{1}{h^2}
        \lp4y_{i,j}
        -y_{i+1,j}-y_{i-1,j}
        -y_{i,j+1}-y_{i,j-1}
        \rp,
\end{equation}
for all $(i,j)$ such that $0\le i,j\le N_h$. Note that the definition of these operators at boundary nodes of $\Omega_h$ needs to extend the grid function $y$ 
on a layer of nodes outside $\Omega_h$. This is  done by using discrete versions of the Neumann conditions (\ref{eq:BCNeuu})-~(\ref{eq:BCNeum}):
 assume that the boundary condition for $y$ is of  the type  $   \frac{\partial y}{\partial n} = z$, where $z$ is a given continuous function. 
Let  $(z^k_{i,j})$ is a suitable grid function approximating  $z$. We then choose the discrete version of the latter equation (first order scheme):
\begin{equation}
    \begin{aligned}
        y^k_{-1,j}
        &=
        y^k_{0,j}+hz^k_{0,j}
        \quad \text{ and } \quad
        y^k_{N_h+1,j}
        =
        y^k_{N_h,j}+hz^k_{N_h,j},
        \\
        y^k_{i,-1}
        &=
        y^k_{i,0}+hz^k_{i,0}
        \quad \text{ and } \quad
        y^k_{i,N_h+1}
        =
        y^k_{i,N_h}+hz^k_{i,N_h},
    \end{aligned}
\end{equation}
for $i,j=0,\dots,N_h$ and $k=0,\dots,N_T$.
\begin{remark}
  \label{sec:finite-diff-meth-2}
We have chosen a first order scheme for the boundary condition in order to preserve the monotonicity of the discrete Hamiltonian at boundary nodes, see later. Since the overall scheme is monotone  as it was already mentioned above, thus first order, it would be pointless to choose a higher order scheme for the boundary conditions.
\end{remark}
\begin{remark}\label{sec:finite-diff-meth-1}
Note that the previously mentioned discrete operators   are not needed at the nodes of $\partial \Omega_h$ at which Dirichlet boundary conditions are imposed.  
\end{remark}
For a grid function $V:\Omega_h\rightarrow\RR$,
we define the value of $V$ at half-integer indices by linear interpolation:
\begin{equation}
    \label{eq:interpV}
    \begin{aligned}
        V_{i+\frac12,j}
        &=
        \frac{V_{i+1,j}+V_{i,j}}2,\quad
        V_{-\frac12,j}
        =
        V_{0,j},
        \text{ and }
        V_{N_h+\frac12,j}
        =
        V_{N_h,j},
        \quad
        0\leq i\leq N_h-1,
        0\leq j\leq N_h,
        \\
        V_{i,j+\frac12}
        &=
        \frac{V_{i,j+1}+V_{i,j}}2, \quad 
        V_{i,-\frac12}
        =
        V_{i,0},
        \text{ and }
        V_{i,N_h+\frac12}
        =
        V_{i,N_h},
        \quad
        0\leq i\leq N_h,
        0\leq j\leq N_h-1.
    \end{aligned}
\end{equation}

In order to define the Godunov scheme for (\ref{eq:HJB}), we introduce the map $\Phi$: for two grid functions $y:\Omega_h\to \RR$ and $V: \Omega_h\to \RR^2$,
the grid function $\Phi(y,V): \Omega_h\to \RR^4$ is  defined by 
\begin{equation}
    \label{eq:defq}
   \Phi(y,V)_{i,j}
    = \left\{
      \begin{array}[c]{c}
\lb\lp D_1^+y\rp_{i,j}-\ll\th V_{i+\frac12,j,1}\rb_{-}\\
    -\lb\lp D_1^+y\rp_{i-1,j}-\ll\th V_{i-\frac12,j,1}\rb_{+}\\
    \lb\lp D_2^+y\rp_{i,j}-\ll\th V_{i,j+\frac12,2}\rb_{-}\\
    -\lb\lp D_2^+y\rp_{i,j-1}-\ll\th V_{i,j-\frac12,2}\rb_{+}
      \end{array}
         \right\} \quad \quad \text{for }i,j=0,\dots,N_h.
\end{equation}

Finally, let us introduce the function  $g: \RR^4\times\RR^2\to \RR$:
\begin{equation}\label{eq:7}
    g(q,V)
    =
    \frac12\labs q\rabs^2
    -\frac{\ll^2\th}2\labs V\rabs^2.
\end{equation}

\subsection{The scheme}
\label{sec:scheme}
With the ingredients defined in paragraph \ref{sec:notations}, we are  ready to propose the discrete version of (\ref{eq:HJB})-~(\ref{eq:defZ}):
\begin{align}
    \label{eq:disHJB}
    &-D_t u^k_{i,j}
    -\nu\lp\Delta_h u^k\rp_{i,j}
    +g\lp \Phi\lp u^k,V^k\rp_{i,j},V^k_{i,j}\rp
    =
    cm^{k+1}_{i,j}
    +f_0(x_{i,j}),
    \\
    \label{eq:disFPK}
    &D_tm^k_{i,j}
    -\nu\lp\Delta_hm^{k+1}\rp_{i,j}
    -\cT\lp \Phi\lp u^k,V^k\rp,m^{k+1}\rp_{i,j}
    =0,
    \\
    \label{eq:disdefV1}
    &V^k_{i,j,1}
    =-\frac 1 {Z^k_{i,j}}
    \sum_{r,s}
    \lp \frac{u^{k}_{r+1,s}-u^k_{r-1,s}}{2h}
    -\ll\th V^k_{r,s,1}\rp
    K\lp x_{i,j},x_{r,s}\rp
    m^{k+1}_{r,s},
    \\
    \label{eq:disdefV2}
    &V^k_{i,j,2}
    =-\frac 1 {Z^k_{i,j}}
    \sum_{r,s}
    \lp \frac{u^{k}_{r,s+1}-u^k_{r,s-1}}{2h}
    -\ll\th V^k_{r,s,2}\rp
    K\lp x_{i,j},x_{r,s}\rp
    m^{k+1}_{r,s},
    \\
    \label{eq:disdefZ}
    &Z^k_{i,j}
    =
    \sum_{r,s}
    K\lp x_{i,j},x_{r,s}\rp
    m^{k+1}_{r,s},
\end{align}
for $i,j=0,\dots,N_h$, $k=0,\dots,N_T-1$, where  the discrete transport operator
$\cT$ is  defined by
\begin{multline}
    \label{eq:defT}
    \cT\lp q,m\rp_{i,j}
    =
    \frac1h\lp
    -q_{i,j,1}m_{i,j}
    +q_{i-1,j,1}m_{i-1,j}
    +q_{i,j,2}m_{i,j}
    -q_{i+1,j,2}m_{i+1,j}
    \right.
    \\
    \left.
    -q_{i,j,3}m_{i,j}
    +q_{i,j-1,3}m_{i,j-1}
    +q_{i,j,4}m_{i,j}
    -q_{i,j+1,4}m_{i,j+1}
    \rp,
\end{multline}
for any $i,j=0,\dots,N_h$.  Note that \eqref{eq:defq} is not enough in order to fully determine $\cT(\Phi(u, V),m)$ at the boundary nodes. We also need the following quantities:
\begin{equation}
    \label{eq:BCq}
    \begin{aligned}
        \Phi(u,V)_{-1,j,1}
        &=
        \lb\lp D_1^+u\rp_{-1,j}-\ll\th V_{0,j,1}\rb_{-}    =   \lb\ll\th V_{0,j,1}\rb_{+} ,
        \\
        \Phi(u,V)_{N_h+1,j,2}
        &=
        -\lb\lp D_1^+u\rp_{N_h,j}-\ll\th V_{N_h,j,1}\rb_{+}= -\lb\ll\th V_{N_h,j,1}\rb_{-}   ,
        \\
        \Phi(u,V)_{i,-1,3}
        &=
        \lb\lp D_2^+u\rp_{i,-1}-\ll\th V_{i,0,2}\rb_{-}= \lb \ll\th V_{i,0,2}\rb_{+}  ,
        \\
        \Phi(u,V)_{i,N_h+1,4}
        &=
        -\lb\lp D_2^+u\rp_{i,N_h}-\ll\th V_{i,N_h,2}\rb_{+}= -\lb\ll\th V_{i,N_h,2}\rb_{-} ,
    \end{aligned}
\end{equation}
for $i,j=0,\dots,N_h$, where the last identity in each line comes from (\ref{eq:disBCNeuu}) below.
The discrete version of (\ref{eq:3}) and (\ref{eq:4}) is  
\begin{equation}
    \label{eq:ICTC}
    u^{N_T}_{i,j}
    =
    \phi(x_{i,j}),
    \text{ and }
    m^0_{i,j}
    =
    m_0\lp x_{i,j}\rp,
    \quad 0\leq i,j\leq N_h.
\end{equation}

The discrete version of  (\ref{eq:BCNeuu})-~(\ref{eq:BCNeum})  is 
\begin{align}
    \label{eq:disBCNeuu}
  &\!\lc\begin{aligned}
  &u^k_{-1,j}
        =
        u^k_{0,j}
  \quad \text{ and } \quad      
        u^k_{N+1,j} 
        =
        u^k_{N,j},
        \\
        &u^k_{i,-1}
        =
        u^k_{i,0}
         \quad \text{ and } \quad      
        u^k_{i,N+1}
        =
        u^k_{i,N},
        % u^k_{-1,j}
        % &=
        % u^k_{0,j}-h\ll\th V^k_{0,j,1}
        % \quad \text{ and } \quad
        % u^k_{N_h+1,j}
        % =
        % u^k_{N_h,j}+h\ll\th V^k_{N_h,j,1},
        % \\
        % u^k_{i,-1}
        % &=
        % u^k_{i,0}-h\ll\th V^k_{i,0,2}
        % \quad \text{ and } \quad
        % u^k_{i,N_h+1}
        % =
        % u^k_{i,N_h}+h\ll\th V^k_{i,N_h,2},
    \end{aligned}
    \right.
    \\
    \label{eq:disBCNeum}
  &\!\lc\begin{aligned}
  &\nu\lp m^{k+1}_{-1,j}-m^{k+1}_{0,j}\rp
    &-h\lb\ll\th V^k_{0,j,1}\rb_+m^{k+1}_{0,j}
    &+h\lb\ll\th V^k_{0,j,1}\rb_-m^{k+1}_{-1,j}
    &=
    0,
    \\
    &\nu\lp m^{k+1}_{N,j}-m^{k+1}_{N+1,j}\rp
    &+h\lb\ll\th V^k_{N,j,1}\rb_+m^{k+1}_{N,j}
    &-h\lb\ll\th V^k_{N,j,1}\rb_-m^{k+1}_{N+1,j}
    &=
    0,
    \\
    &\nu\lp m^{k+1}_{i,-1}-m^{k+1}_{i,0}\rp
    &-h\lb\ll\th V^k_{i,0,1}\rb_+m^{k+1}_{i,0}
    &+h\lb\ll\th V^k_{i,0,1}\rb_-m^{k+1}_{i,-1}
    &=
    0,
    \\
    &\nu\lp m^{k+1}_{i,N}-m^{k+1}_{i,N+1}\rp
    &+h\lb\ll\th V^k_{i,N,1}\rb_+m^{k+1}_{i,N}
    &-h\lb\ll\th V^k_{i,N,1}\rb_-m^{k+1}_{i,N+1}
    &=
    0,
        % m^{k+1}_{-1,j}
        % &=
        % m^{k+1}_{0,j}
        % \quad \text{ and } \quad
        % m^{k+1}_{N_h+1,j}
        % =
        % m^{k+1}_{N_h,j},
        % \\
        % m^{k+1}_{i,-1}
        % &=
        % m^{k+1}_{i,0}
        % \quad \text{ and } \quad
        % m^{k+1}_{i,N_h+1}
        % =
        % m^{k+1}_{i,N_h},
    \end{aligned}
    \right.
\end{align}
for $i,j=0,\dots,N_h$ and $k=0,\dots,N_T-1$.

Note that  (\ref{eq:disHJB}) is an implicit scheme for (\ref{eq:HJB}), (recall that (\ref{eq:HJB}) is backward w.r.t. time), whereas (\ref{eq:disFPK}) is an implicit scheme for (\ref{eq:FPK}),  (recall that (\ref{eq:FPK}) is forward w.r.t. time). This explains why no restriction is made on the time step.

The discrete Hamiltonian introduced in (\ref{eq:7}) $g: \RR^4\times\RR^2\to \RR$, $(q,V)\mapsto g(q,V)$,  has the following properties:
\begin{enumerate}[label=\bf{H\arabic*}]
    \item (monotonicity)
        \label{hypo:gmono}
        if $q=(q_1,q_2,q_3,q_4)$, then 
        $g$ is nonincreasing with respect to $q_1$
        and $q_3$ and nondecreasing with respect to
        $q_2$ and $q_4$
    \item (consistency)
        \label{hypo:gconsistent}
        $g\lp q_1,q_1,q_3,q_3,V\rp
        =H\lp q_1,q_3,V\rp$   
    \item (regularity)
        \label{hypo:gC1}
        $g$ is $C^1$.
\end{enumerate}

\begin{remark}
  \label{sec:scheme-1}
We aim at using  Newton iterations in order to  solve the system of nonlinear equations arising from the discrete scheme.
For that purpose, we need to  linearize the discrete version of the Fokker-Planck equation. Therefore,
for another small positive parameter $\ee$, we may replace the definition (\ref{eq:defq}) of $\Phi$ by the following:
\begin{equation}
    \label{eq:defq2}
   \Phi(y,V)_{i,j}
    = \left\{
      \begin{array}[c]{c}
\lb\lp D_1^+y\rp_{i,j}-\ll\th V_{i+\frac12,j,1}\rb_{-,\ee}\\
    -\lb\lp D_1^+y\rp_{i-1,j}-\ll\th V_{i-\frac12,j,1}\rb_{+,\ee}\\
    \lb\lp D_2^+y\rp_{i,j}-\ll\th V_{i,j+\frac12,2}\rb_{-,\ee}\\
    -\lb\lp D_2^+y\rp_{i,j-1}-\ll\th V_{i,j-\frac12,2}\rb_{+,\ee}
      \end{array}
         \right\} \quad \quad \text{for }i,j=0,\dots,N_h,
\end{equation}
 where the $C^1$ approximation  $a\mapsto a_{+,\ee}$
of $a\mapsto a_+= a \mathbf{1}_{a>0}$ is defined by
\begin{equation}\label{eq:8}
    a_{+,\ee}
    =
    \frac12\lp e^{-\frac{|a|}{\ee}} -1\rp\ee
    +\mathbf{1}_{a>0}a,
\end{equation}
and $a_{-,\ee} = a_{+,\ee}-a$. We may modify (\ref{eq:BCq}) and (\ref{eq:disBCNeum}) accordingly.
\end{remark}

\begin{lemma}\label{sec:scheme-2}
    Assume that $(u,m,V)$ satisfies
    \eqref{eq:disHJB}-\eqref{eq:disdefZ},
    \eqref{eq:ICTC}, and
    \eqref{eq:disBCNeuu}, \eqref{eq:disBCNeum}.

    The total mass of $m$ is preserved,
    i.e.
    \begin{equation}
        \sum_{1\leq i,j\leq N_h}
        m^k_{i,j}
        =
        \sum_{1\leq i,j\leq N_h}
        m^0_{i,j},
    \end{equation}
    for any $k=0,\cdots,N_T$.
\end{lemma}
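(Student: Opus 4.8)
The plan is to prove mass conservation by summing the discrete Fokker--Planck equation \eqref{eq:disFPK} over all interior nodes and showing that every term except the time-derivative telescopes or cancels. First I would fix $k$ and sum the identity $D_t m^k_{i,j} - \nu(\Delta_h m^{k+1})_{i,j} - \cT(\Phi(u^k,V^k),m^{k+1})_{i,j} = 0$ over $1\leq i,j\leq N_h$. The sum of the time-derivative terms gives $\frac{1}{\Delta t}\bigl(\sum_{i,j} m^{k+1}_{i,j} - \sum_{i,j} m^{k}_{i,j}\bigr)$, so it suffices to show that the sum of the remaining two contributions (the discrete Laplacian and the discrete transport operator) vanishes. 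If that holds for every $k$, then $\sum_{i,j} m^{k+1}_{i,j} = \sum_{i,j} m^{k}_{i,j}$ for all $k$, and the claimed equality follows by induction from $k=0$.

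The core of the argument is a discrete integration-by-parts (summation-by-parts) computation. For the Laplacian term $\sum_{i,j}(\Delta_h m^{k+1})_{i,j}$, I would expand using \eqref{eq:5} and observe that in the interior all the off-diagonal contributions cancel pairwise; what survives is a boundary contribution involving the ghost nodes $m^{k+1}_{-1,j}, m^{k+1}_{N_h+1,j}, m^{k+1}_{i,-1}, m^{k+1}_{i,N_h+1}$. Similarly, summing the transport operator $\cT(\Phi(u^k,V^k),m^{k+1})_{i,j}$ from \eqref{eq:defT} over the interior, the terms telescope along each row and column, leaving a boundary contribution that pairs the flux coefficients $q_{i,j,\cdot}=\Phi(u^k,V^k)_{i,j,\cdot}$ at the edges with the neighboring values of $m^{k+1}$, including the ghost-node values supplied by \eqref{eq:BCq}.

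The key step is then to verify that the surviving boundary terms from the Laplacian and from the transport operator cancel exactly. This is precisely what the discrete Neumann boundary conditions \eqref{eq:disBCNeum} are engineered to guarantee: each of the four lines of \eqref{eq:disBCNeum} equates a normal-difference term $\nu(m^{k+1}_{\text{ghost}}-m^{k+1}_{\text{boundary}})$ against the transport flux $\pm h[\ll\th V^k_{\cdot,\cdot,1}]_{\pm}\,m^{k+1}_{\cdot}$ along the corresponding face, using the boundary fluxes recorded in \eqref{eq:BCq}. I would group the leftover boundary contributions face by face (left, right, bottom, top) and substitute \eqref{eq:disBCNeum} to see that each face's net contribution is zero; the discrete Neumann condition on $u$, \eqref{eq:disBCNeuu}, enters through the simplified boundary flux expressions in \eqref{eq:BCq}.

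The main obstacle I expect is purely bookkeeping: keeping the index ranges consistent at the corners, matching the sign conventions of the upwind flux decomposition in \eqref{eq:defq} and \eqref{eq:BCq} against those in the boundary condition \eqref{eq:disBCNeum}, and making sure the half-integer interpolation \eqref{eq:interpV} of $V$ is handled correctly at the edges. No analytic difficulty arises—the result is an exact algebraic identity—so the work lies entirely in carefully organizing the telescoping sums and confirming that \eqref{eq:disBCNeum} cancels the residual boundary flux face by face. Note that the argument is insensitive to whether one uses the original $\Phi$ from \eqref{eq:defq} or the regularized version \eqref{eq:defq2} from Remark~\ref{sec:scheme-1}, since conservation depends only on the telescoping structure of $\cT$ and the matching boundary conditions, not on the particular form of $a\mapsto a_{\pm}$.
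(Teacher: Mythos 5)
Your proposal is correct and follows essentially the same route as the paper's proof: sum the discrete Fokker--Planck equation over the grid, telescope the discrete Laplacian and the transport operator $\cT$ to leave only boundary (ghost-node) contributions, and observe that these cancel exactly by \eqref{eq:defq}, \eqref{eq:BCq}, \eqref{eq:disBCNeuu} and \eqref{eq:disBCNeum}. The only caveat is a bookkeeping one you already flag: the grid runs over $0\leq i,j\leq N_h$, so the summation should include the $i=0$ and $j=0$ rows (as in the paper's proof), the range $1\leq i,j\leq N_h$ in the lemma's display being a typo.
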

\begin{proof}
      From (\ref{eq:5}),    we deduce that
      \begin{multline}
          \label{eq:9bis}
        \sum_{0\leq i,j\leq N_h}
          \nu\lp\Delta m^{k+1}\rp_{i,j}
          =      
          \frac{\nu}{h^2}
          \lb\sum_{j=0}^{N_h}\lp m^{k+1}_{N_h+1,j}
          - m^{k+1}_{N_h,j}
          - m^{k+1}_{-1,j}
          + m^{k+1}_{0,j}
          \rp
          \right.
          \\
           \left.
          +\sum_{i=0}^{N_h}\lp m^{k+1}_{i,N_h+1}
          - m^{k+1}_{i,N_h}
          - m^{k+1}_{i,-1}
          + m^{k+1}_{i,0}
          \rp\rb
          .
      \end{multline}
  Then we sum \eqref{eq:defT} for $i,j=0,\dots,N_h$:
    \begin{multline}\label{eq:9}
        \sum_{0\leq i,j\leq N_h}
        \cT(q,m)_{i,j}
        =
        \frac1h
        \sum_{j=1}^{N_h}
        -q_{N_h,j,1}m_{N_h,j}
        +q_{-1,j,1}m_{-1,j}
        +q_{0,j,2}m_{0,j}
        -q_{N_h+1,j,2}m_{N_h+1,j}
        \\
        +\frac1h
        \sum_{i=1}^{N_h}
        -q_{i,N_h,3}m_{i,N_h}
        +q_{i,-1,3}m_{i,-1}
        +q_{i,0,4}m_{i,0}
        -q_{i,N_h+1,4}m_{i,N_h+1}.
    \end{multline}
    From \eqref{eq:defq}, \eqref{eq:BCq}, \eqref{eq:disBCNeuu} and \eqref{eq:disBCNeum},
    the sums of the right hand sides of \eqref{eq:9} and \eqref{eq:9bis} vanish if $q=\Phi\lp u^k,V^k\rp$,   for any $k=0,\dots,N_T-1$.
    
From the observations above, summing  \eqref{eq:disFPK} on $i,j=0,\dots,N_h$  yields
\begin{displaymath}
        \sum_{0\leq i,j\leq N_h}
        m^{k+1}_{i,j}
        =
        \sum_{0\leq i,j\leq N_h}
        m^{k}_{i,j},
\end{displaymath}
then the desired result.
\end{proof}

\subsection{Solving the discrete version of the Hamilton-Jacobi-Bellman equation}\label{sec:solv-discr-vers}
For brevity, we will use the notation $(y_{i,j}^k)$ for a grid function, omitting that the indices $i,j$ take their values in $\{0,\dots, N_h\}$ and that $k$ takes its values in $\{0,\dots, N_T\}$. This paragraph is devoted to solving the system of nonlinear equations satisfied by the grid function $(u_{i,j}^k)$, 
given the grid functions $(m^k_{i,j})$ and  $(V_{i,j}^k)$. Let $\ft=(\ft^k_{i,j})$ 
denote the grid function defined by  
\begin{equation}
\label{eq:13}  
\ft_{i,j}^k  =   cm^{k+1}_{i,j} +f_0\lp x_{i,j}\rp.
\end{equation}
 We may then rewrite (\ref{eq:disHJB}),(\ref{eq:disBCNeuu}) and the first identity in (\ref{eq:ICTC}) in the  compact form 
 \begin{equation}
   \label{eq:10}
u=\cF_u\lp \ft,V\rp.
 \end{equation}
Finding $u$ given $m$ and $V$ amounts to solving a discrete version of a nonlinear parabolic equation posed backward in time
with Neumann boundary conditions. This is much simpler than solving the complete forward-backward system for $(u,m, V)$, because a backward time-marching procedure can be used. Since, as it was already observed  above, the scheme is implicit, each time step consists of solving the discrete version of a nonlinear elliptic partial differential equation which is local because $V$ is given. Starting from the terminal time step $N_T$ for which \eqref{eq:ICTC} gives an explicit formula for $u^{N_T}$, the  $k$-th step  of the backward loop consists of computing $u^k$ by solving (\ref{eq:disHJB}) and (\ref{eq:disBCNeuu}) given $u^{k+1}$, $m^{k+1}$ and $V^k$.  This is done by  means of  Newton iterations. 

For completeness, let us give a few details on Newton iterations. We introduce the operator $\cR_u$,
\begin{equation}
    \label{eq:cRu}
    \cR_u(u',v',f',V')_{i,j}
    =
    u'_{i,j}
    -v'_{i,j}
    +\Delta t\lb
    -\nu\lp\Delta_hu'\rp_{i,j}
    +g\lp\Phi\lp u',V'\rp_{i,j},V'_{i,j}\rp
    -f'_{i,j}\rb,
\end{equation}
for $u',v',f':\Omega_h\rightarrow\RR$, $V':\Omega_h\rightarrow\RR^2$,
and $i,j=0,\dots,N_h$.  Note that in (\ref{eq:cRu}),  $u'$ is extended outside $\Omega_h$ thanks to (\ref{eq:disBCNeuu}).
We aim at approximating the solution of 
\begin{equation}
  \label{eq:11}
 \cR_u(u',u^{k+1},\ft^k,V^k)=0.
\end{equation}
Starting  from an initial guess noted $u^{k,0}:\Omega_h\rightarrow\RR$,  the Newton iterations consist of
computing by  induction a sequence  $u^{k,\ell}$ of approximations of the solution to (\ref{eq:11}): 
given $u^{k,\ell}$, $u^{k,\ell+1}$ is found  by solving the system of linear equations
\begin{displaymath}
    d_u\cR_u\lp u^{k,\ell}, u^{k+1},\ft^k,V^k\rp  (u^{k,\ell+1}-
    u^{k,\ell})
=
    -\cR_u\lp u^{k,\ell},u^{k+1},\ft^k,V^k\rp  .
\end{displaymath}
In the latter equation, the Jacobian matrix
$ d_u\cR_u\lp u^{k,\ell}, u^{k+1},\ft^k,V^k\rp $
is sparse since the PDE is local,
and invertible since the scheme is monotone.  Note that $ d_u\cR_u\lp u^{k,\ell}, u^{k+1},\ft^k,V^k\rp $  depends neither on  $ u^{k+1}$ nor on  $\ft^k$, so we can write it   $ d_u\cR_u\lp u^{k,\ell}, \cdot,\cdot,V^k\rp $. The system of linear equations is solved by using special algorithms for
sparse matrices. In the numerical simulations presented below, we use the C-library UMFPACK, see \cite{umfpack},
implementing  unsymmetric multifrontal method and direct sparse LU factorization. 

Note that one may choose the initial guess $u^{k,0}=u^{k+1}$.
The Newton iterations are stopped when the residual $| \cR_u(u^{k,\ell},u^{k+1},\tilde f^k,V^k)|$ is small enough, say for $\ell=\ell_0$.
Then we set $u^{k}=u^{k,\ell_0}$. It is well known that the Newton algorithm for (\ref{eq:disHJB}) is equivalent to an optimal policy iteration algorithm,
 that it is convergent for any initial guess, and that the convergence is quadratic. 

\subsection{Solving the discrete version of the Fokker-Planck-Kolmogorov equation}\label{sec:solv-discr-vers-1}

This paragraph is devoted to solving the system of linear equations satisfied by the grid function $(m_{i,j}^k)$, 
given the grid functions $(u^k_{i,j})$ and  $(V_{i,j}^k)$.
 We may then rewrite (\ref{eq:disFPK}),~(\ref{eq:disBCNeum}) and the second identity in (\ref{eq:ICTC}) in the  compact form 
 \begin{equation}
\label{eq:12}
m=\cF_m\lp u,V\rp.
 \end{equation}
Finding $m$ given $u$ and $V$ amounts to solving a discrete version of an  linear parabolic equation posed forward in time
with Neumann boundary conditions. Since the scheme is implicit, each time step consists of solving the discrete version of a 
linear elliptic partial differential equation which is  local. Starting from the terminal time step $0$ for which \eqref{eq:ICTC} gives an explicit formula for $m^{0}$, the  $k$-th step  of the forward loop consists of computing $m^{k+1}$ by solving  (\ref{eq:disFPK}), (\ref{eq:disBCNeum})  given $m^{k}$, $u^{k+1}$ and $V^k$.  It is easy to see that the matrix of the latter system of linear equations is exactly
the transposed of  $d_u\cR_u\lp u^{k}, \cdot,\cdot,V^k\rp$ which has been introduced in the previous paragraph. Therefore, it is sparse and invertible.  In the numerical simulations presented below, we use  the C-library UMFPACK again.

\section{Newton algorithms for solving the whole system (\ref{eq:disHJB})-(\ref{eq:disBCNeum})}
\label{sec:newt-algor-solv}
The solution of the system of nonlinear equations (\ref{eq:disHJB})-(\ref{eq:disBCNeum} is not easy because 
\begin{enumerate}
\item the system is the discrete version of a system of forward-backward equations, which precludes a simple time marching algorithm
\item The easiest instances of MFGs can be interpreted as optimal control problems driven by a partial differential equation, which opens the way 
to algorithms based on the variational structure. In the MFGC  considered here, there is no underlying variational principle and these algorithms cannot be used.
\end{enumerate}
Following previous works of the first author, see \cite{MR3135339}, we choose to use a continuation method (for example with respect to the viscosity parameter $\nu$) in which every system of nonlinear equations (given the parameter of the continuation method) is solved by means of Newton iterations.
With  Newton algorithm, it is important to have a good initial guess of the solution; for that,  we take advantage of the continuation method by  choosing the initial guess as the solution obtained with the previous value of  the parameter. Alternatively, we have sometimes taken the initial guess from the simulation of the same problem on a coarser grid, using interpolation. 

It is also important  to implement the Newton algorithm on a ``well conditioned''  system. Therefore,  we shall not directly address (\ref{eq:disHJB})-(\ref{eq:disBCNeum}), but we shall rather eliminate the unknowns $u$ and $m$ by using the time-marching loops  described in paragraphs \ref{sec:solv-discr-vers} and \ref{sec:solv-discr-vers-1} and see (\ref{eq:disHJB})-(\ref{eq:disBCNeum}) as a fixed point problem for $(\tilde f,V)$. 

Before describing the algorithm, we need to provide a numerical method for obtaining the average drift given $u$ and $m$,
i.e. for solving (\ref{eq:disdefV1})-(\ref{eq:disdefV2}) at least approximately.

\subsection{The coupling cost and the average drift}\label{sec:coupl-terms-aver}
Let us introduce  $\cF_f$ which maps the grid function $m$ defined on $\cT_{\Delta t}\times \Omega_h$ to the grid function $\ft$ 
given in (\ref{eq:13}).
We also define the maps $\cZ$ and $\cV$ by
\begin{align}
    \label{eq:defcZ}
    \cZ(m')_{i,j}
    &=
    \sum_{r,s}
    K\lp x_{i,j},x_{r,s}\rp
    m'_{r,s},
    \\
    \label{eq:defcV1}
    \cV(u',m',V')_{i,j,1}
    &=
    -\sum_{r,s}
    \lp \frac{u'_{r+1,s}-u'_{r-1,s}}{2h}
    -\ll\th V'_{r,s,1}\rp
    K\lp x_{i,j},x_{r,s}\rp
    \frac{m'_{r,s}}{\cZ(m')_{i,j}},
    \\
    \label{eq:defcV2}
    \cV(u',m',V')_{i,j,2}
    &=
    -\sum_{r,s}
    \lp \frac{u'_{r,s+1}-u'_{r,s-1}}{2h}
    -\ll\th V'_{r,s,2}\rp
    K\lp x_{i,j},x_{r,s}\rp
    \frac{m'_{r,s}}{\cZ(m')_{i,j}},
\end{align}
for $u',m':\Omega_h\rightarrow\RR$,
$V':\Omega_h\rightarrow\RR^2$
and $i,j=0,\dots,N_h$.

It can be proved exactly in the same manner as in the continuous case, see  \cite[Lemma 2.4]{2019arXiv190411292K}, that 
$V'\mapsto \cV(u',m',V')$ is a contraction in the maximum-norm for instance, with a contraction factor $|\ll|\th$.

For a positive integer $L$, we define the map  $\cF_V$   by :
\begin{eqnarray}
  \label{eq:14}
\cF_V\lp u,m,V\rp&=&\Vt,\\
 \Vt^k&=& \Vt^{k,L}, \qquad \text{for }k=0,\dots, N_T-1,
\end{eqnarray}
where $ \Vt^{k,0}
    =
    V^k$ and the sequence $\Vt^{k,\ell}$ is defined by the following induction:
\begin{equation}
    \label{eq:defVt}
    \Vt^{k,\ell}
    =
    \cV\lp u^k,m^{k+1},\Vt^{k,\ell-1}\rp,\qquad 
    1\leq \ell \leq L.
  \end{equation}
  
\begin{remark}\label{sec:coupl-terms-aver-1}
    \begin{enumerate}
        \item
          With a slight abuse of notation, if  $L=\infty$, then $\Vt^k$ is the  fixed
          point of the map $V'\mapsto \cV\lp u^k,m^{k+1},V' \rp$.
        \item
          The induction (\ref{eq:defVt}) corresponds to Jacobi iterations and can be easily parallelised. It is also possible to  implement Gauss-Seidel iterations, which are a little more complex to write. They consist of using the  components of $\Vt^{k, \ell}$ as soon as they are obtained (instead of those of $\Vt^{k, \ell-1}$) in order to compute the  components of  $\Vt^{k, \ell}$ which have not been obtained yet . In our implementation, we have in fact used the Gauss-Seidel iterations with a lexicographic ordering of the components of the grid functions.
    \end{enumerate}
\end{remark}
\begin{remark}\label{sec:coupl-cost-aver}
  As we shall see in paragraph \ref{sec:behaviour-algorithm}, the choice of $L$ does not impact the convergence of the overall iterative algorithm. Therefore, a good choice turns out to be  $L=1$.
\end{remark}
\subsection{The linearized operators}\label{sec:linearized-operators}
\subsubsection{Notation}
\label{sec:notation}
Let $u,m,f:\cT_{\Delta h}\times \Omega_h \to\RR$ be generic grid functions 
standing respectively for discrete versions of the value function, the law of the distribution of states, the right hand side of the discrete HJB equation (\ref{eq:disHJB}).   Let  $V:\cT_{\Delta h}\times \Omega_h\to \RR^2$ be a generic grid function  standing  for the average drift.
Let us introduce the operators obtained by differentiation of the maps $\cF_u$, $\cF_m$ and $\cF_V$:
\begin{equation}\label{eq:26}
    \begin{aligned}
        B_{u,f}
        &=
        D_f\cF_u(f,V),
        \quad
        &B_{u,V}
        &=
        D_V\cF_u(f,V),
        \\
        B_{m,V}
        &=
        D_u\cF_m(u,V),
        \quad
        &B_{m,V}
        &=
        D_V\cF_m(u,V),
        \\
        C_{f,m}
        &=
        D_m\cF_f(m),
        \quad
        &C_{V,u}
        &=
        D_u\cF_{V}(u,m,V),
        \\
        C_{V,m}
        &=
        D_m\cF_{V}(u,m,V),
        \quad
        &C_{V,V}
        &=
        D_V\cF_{V}(u,m,V).
%         \\
%         C_{\text{res}_V,m}
%         &=
%         D_m\cF_{\text{res}_V}(u,m,V),
%         \quad
%         &C_{\text{res}_V,V}
%         &=
%         D_V\cF_{\text{res}_V}(u,m,V).
    \end{aligned}
\end{equation}
In the three paragraphs below, we explain how these differential operators can be computed.

% We assume that 
% \begin{displaymath}
%   u=\cF_u\lp f,V\rp,\quad \quad m=\cF_m\lp u,V\rp, 
% \end{displaymath}
% and $V,dV,\Vt:\cT_{\Delta h}\times \Omega_h
% \rightarrow \RR^2$ such that
% $u=\cF_u\lp f,V\rp$, $m=\cF_m\lp u,V\rp$
% and $\Vt=\cF_V\lp u,m,V)\rp$.

\subsubsection{Linearized Hamilton-Jacobi-Bellman equation}\label{sec:line-hamilt-jacobi}
The variation  $du$ of $u= \cF_u (f,V)$  induced by variations $df$ and $dV$ of $f$ and $V$ is given by 
\begin{equation}
\label{eq:15}
    du
    =
    \begin{pmatrix}
        B_{u,f}
        &B_{u,V}
    \end{pmatrix}
    \begin{pmatrix}
        df
        \\
        dV
    \end{pmatrix}.
\end{equation}
More explicitly, $du$ is obtained by linearizing (\ref{eq:disHJB}), (\ref{eq:disBCNeuu}) and (\ref{eq:ICTC}). It satisfies 
\begin{equation}
    \label{eq:linHJB}
    -D_t du^k_{i,j}
    -\nu\lp\Delta_h du^k\rp_{i,j}
    +\Phi\lp u^k,V^k\rp_{i,j}
    \cdot 
\Bigl(    d\Phi\lp u^k,V^k\rp
    \lp du^k,dV^k\rp\Bigr)_{i,j}%dq^k_{i,j}
    -\ll^2\th V^k_{i,j}\cdot dV^k_{i,j}
    =
    df^k_{i,j},
\end{equation}
for $i,j=0,\dots,N_h$ and $k=0,\dots,N_T$.
The first (respectively second) inner product appearing in (\ref{eq:linHJB}) takes  place in $\RR^4$ (respectively $\RR^2$). The set of equations (\ref{eq:linHJB}) is supplemented with the terminal condition
\begin{equation}
    \label{eq:TCdu}
    du_{i,j}^{N_T}    =    0,
\end{equation}
and the boundary conditions 
\begin{equation}
    \label{eq:BCdu}
    \lc
    \begin{aligned}
        du^k_{-1,j}
        &=
        du^k_{0,j}
        ,
        \quad
        &du^k_{N_h+1,j}
        &=
        du^k_{N_h,j}
       ,
        \\
        du^k_{i,-1}
        &=
        du^k_{i,0}
        ,
        \quad
        &du^k_{i,N_h+1}
        &=
        du^k_{i,N_h}
       .
    \end{aligned}
    \right.
\end{equation}
Given $df$ and $dV$, the variation $du$ is found by solving  (\ref{eq:linHJB}),(\ref{eq:TCdu}) and (\ref{eq:BCdu}). This is done by marching backward in time and at each time step solving a system of linear equations with a  sparse   and invertible matrix (of the same form as in the Newton iterations described in \ref{sec:solv-discr-vers}). Again, we use the library UMFPACK for that.

\subsubsection{Linearized Kolmogorov-Fokker-Planck equation}\label{sec:line-kolm-fokk}
The variation  $dm$ of $m= \cF_m (u,V)$  induced by variations $du$ and $dV$ of $u$ and $V$ is given by 
\begin{equation}\label{eq:16}
    dm
    =
    \begin{pmatrix}
        B_{m,u}
        &B_{m,V}
    \end{pmatrix}
    \begin{pmatrix}
        du
        \\
        dV
    \end{pmatrix}.
\end{equation}
The grid function  $dm$ is obtained by linearizing (\ref{eq:disFPK}), (\ref{eq:disBCNeum}) and (\ref{eq:ICTC}). It satisfies 
\begin{equation}
    \label{eq:FPKdm}
    D_tdm^k_{i,j}
    -\nu\lp\Delta_hdm^{k+1}\rp_{i,j}
    -\cT_{i,j}\lp
d\Phi\lp u^k,V^k\rp
    \lp du^k,dV^k\rp\Bigr),m^{k+1}\rp
    -\cT_{i,j}\lp \Phi\lp u^k,V^k\rp,dm^{k+1}\rp
    =0,
\end{equation}
for $i,j=0,\dots,N_h$ and $k=0,\dots,N_T$. The set of equations (\ref{eq:FPKdm}) is supplemented with the initial condition
\begin{equation}\label{eq:18}
    dm^0_{i,j}
    =
    0,
    \quad
    i,j=0,\dots,N_h,
\end{equation}
and the boundary conditions

\begin{equation}\label{eq:17}
\lc \begin{aligned}
  &\nu\lp dm^{k+1}_{-1,j}-dm^{k+1}_{0,j}\rp
    &-h\lb\ll\th V^k_{0,j,1}\rb_+dm^{k+1}_{0,j}
    &+h\lb\ll\th V^k_{0,j,1}\rb_-dm^{k+1}_{-1,j}
    &=
    0,
    \\
    &\nu\lp dm^{k+1}_{N,j}-dm^{k+1}_{N+1,j}\rp
    &+h\lb\ll\th V^k_{N,j,1}\rb_+dm^{k+1}_{N,j}
    &-h\lb\ll\th V^k_{N,j,1}\rb_-dm^{k+1}_{N+1,j}
    &=
    0,
    \\
    &\nu\lp dm^{k+1}_{i,-1}-dm^{k+1}_{i,0}\rp
    &-h\lb\ll\th V^k_{i,0,1}\rb_+dm^{k+1}_{i,0}
    &+h\lb\ll\th V^k_{i,0,1}\rb_-dm^{k+1}_{i,-1}
    &=
    0,
    \\
    &\nu\lp dm^{k+1}_{i,N}-dm^{k+1}_{i,N+1}\rp
    &+h\lb\ll\th V^k_{i,N,1}\rb_+dm^{k+1}_{i,N}
    &-h\lb\ll\th V^k_{i,N,1}\rb_-dm^{k+1}_{i,N+1}
    &=
    0.
        % m^{k+1}_{-1,j}
        % &=
        % m^{k+1}_{0,j}
        % \quad \text{ and } \quad
        % m^{k+1}_{N_h+1,j}
        % =
        % m^{k+1}_{N_h,j},
        % \\
        % m^{k+1}_{i,-1}
        % &=
        % m^{k+1}_{i,0}
        % \quad \text{ and } \quad
        % m^{k+1}_{i,N_h+1}
        % =
        % m^{k+1}_{i,N_h},
    \end{aligned}
    \right.
%     \lc
%     \begin{aligned}
%         dm^k_{-1,j}
%         &=
%         dm^k_{0,j},
%         \quad
%         &dm^k_{N_h+1,j}
%         &=
%         dm^k_{N_h,j},
%         \\
%         dm^k_{i,-1}
%         &=
%         dm^k_{i,0},
%         \quad
%         &dm^k_{i,N_h+1}
%         &=
%         dm^k_{i,N_h}.
%     \end{aligned}
%     \right.
\end{equation}
Here again, (\ref{eq:FPKdm}), (\ref{eq:18}) and (\ref{eq:17}) is solved by marching in time and solving a system of linear equations at each time step
(using UMFPACK).

\subsubsection{Linearized coupling costs and average drifts}\label{sec:line-coupl-costs}
The variation of $\ft=\cF_f(m)$ induced by a variation $dm$ of $m$ is obviously given by $   d\ft_{i,j}^k    =    cdm^k_{i,j}$, hence $C_{f,m}dm=cdm$.
\\
Let us turn to the variation of $\Vt=\cF_V(u,m,V)$ induced by  the variations $du$, $dm$ and $dV$ of $u$, $m$ and $V$. Differentiating (\ref{eq:14})-(\ref{eq:defVt}) leads to 
\begin{align*}
    d\Vt^{k,0}
    &=
    dV^{k},
    \\
    d\Vt^{k,\ell}
    &=
    \cV\lp du^k,m^{k+1},d\Vt^{k,\ell-1}\rp
    +d_m\cV\lp u^k,m^{k+1},\Vt^{k,\ell-1}\rp dm^{k+1},
    \\
    d\Vt ^k 
    &=
    d\Vt^{k,L},
\end{align*}
see (\ref{eq:defVt}) for the definition of $\Vt^{k,L}$.

This permits to compute 
\begin{displaymath}
d\Vt= \Bigl(C_{V,u} \; C_{V,m} \; C_{V,V}\Bigr)   \begin{pmatrix}
        du
        \\
        dm
        \\
        dV
    \end{pmatrix}.
\end{displaymath}
To sumarize,
\begin{align*}
    \begin{pmatrix}
        d\ft
        \\
        d\Vt
    \end{pmatrix}
    &=
    C
    \begin{pmatrix}
        du
        \\
        dm
        \\
        dV
    \end{pmatrix},\qquad \text{with }\qquad C=  \begin{pmatrix} 0  & C_{fm} & 0\\ C_{V,u}& C_{V,m}& C_{V,V}
\end{pmatrix}.
%     \\
%     &=
%     CB_uB_m
%     \begin{pmatrix}
%         df
%         \\
%         dV
%     \end{pmatrix}.
\end{align*}

\subsection{The algorithm for solving (\ref{eq:disHJB})-(\ref{eq:disBCNeum})}
\label{sec:algor-solv-refeq:d}

We see (\ref{eq:disHJB})-(\ref{eq:disBCNeum}) as a fixed point problem for the pair $(f, V)$, which we write 
\begin{equation}\label{eq:19}
    \cG_f(f,V)=0,
    \quad
    \text{ and }
\quad     \cG_V(f,V)=0,
\end{equation}
where $\cG_f,\cG_V$ are defined by
\begin{eqnarray}
  \label{eq:20}
    \cG_f(f,V)
    &=&
    f-\cF_f\circ\cF_m\Bigl(\cF_u(f,V),V\Bigr),
    \\ \label{eq:21}
    \cG_V(f,V)
    &=&
    V-\cF_{V}\Bigl(\cF_u(f,V),\cF_m\lp\cF_u(f,V),V\rp,V\Bigr).
\end{eqnarray}

\begin{remark}
  Given $u$ and $m$,  $V\mapsto W=\cF_V\lp u,m,V\rp $ is obtained  as follows: for each $k$,  computing $W^k$  consists of 
  iterating  $W^k\times \cV^L(u^k,m^{k+1}, W^k)$  $L$ times
starting from $W^k=V^k$. Recall that  $\cV(u^k,m^{k+1},\cdot)$ 
is in \eqref{eq:defcV1} and \eqref{eq:defcV2}, and is a contraction with a unique fixed point.
Therefore, $\cF_V\lp u,m,\cdot\rp$ has also a unique fixed point  which does not depend on $L$. Since  $\cF_u$ and $\cF_m$ do not depend on $L$, neither do the  the solutions of \eqref{eq:19}.
\end{remark}

The Newton iterations involve the Jacobian $A(f, V)$ of the map $(f,V)\mapsto  \Bigl(\cG_f(f,V)), \cG_V(f,V)\Bigr)$. We set 
\begin{equation}\left\{
    \begin{aligned}
        &A_{f,f}(f,V)
        =
        D_f\cG_f(f,V),
        \quad
        &A_{f,V}(f,V)
        =
        D_V\cG_f(f,V),
        \\
        &A_{V,f}(f,V)
        =
        D_f\cG_V(f,V),
        \quad
        &A_{V,V}(f,V)
        =
        D_V\cG_V(f,V),
    \end{aligned}\right.
\end{equation}
and the blocks of the Jacobian $A(f,V)$ are defined by:
\begin{eqnarray}
  \label{eq:22}
        A_{f,f}(f,V)
        &=&
        I_N-C_{f,m}B_{m,u}B_{u,f},
        \\ \label{eq:23}
        A_{f,V}(f,V)
        &=&
        -C_{f,m}\lp B_{m,u}B_{u,V} + B_{m,V}\rp,
        \\\label{eq:24}
        A_{V,f}(f,V)
        &=&
        -C_{V,u}B_{u,f}
        -C_{V,m}B_{m,u}B_{u,f},
        \\\label{eq:25}
        A_{V,V}(f,V)
        &=&
        I_{2N}-C_{V,u}B_{u,V}-C_{V,m}\lp B_{m,u}B_{u,V} + B_{m,V}\rp
        -C_{V,V},
\end{eqnarray}
where $B_{u,f}$,  $B_{u,V}$, $B_{m,u}$,  $B_{m,V}$,   $C_{V,u}$, $C_{V,m}$,  $C_{V,V}$ are given by 
(\ref{eq:26}) with  $u=\cF_u(f,V)$ and   $m=\cF_m(u,V)$. In an equivalent manner, we can write 
\begin{equation}\label{eq:27}
    A(f,V)=
    I_{3N}
    -
    \begin{pmatrix}
        C_{f,m}B_{m,u}B_{u,f}
        &C_{f,m}\lp B_{m,u}B_{u,V} + B_{m,V}\rp\\
        C_{V,u}B_{u,f}
        +C_{V,m}B_{m,u}B_{u,f}
        &C_{V,u}B_{u,V}
        +C_{V,m}\lp B_{m,u}B_{u,V} + B_{m,V}\rp
        +C_{V,V}
    \end{pmatrix},
\end{equation}
or 
\begin{equation}\label{eq:28}
    A(f,V)
  %  =
   % I_{3N}
    %-CB_uB_m
    =
    I_{3N}-
    \begin{pmatrix}
        0_N
        &C_{f,m}
        &0_{N,2N}
        \\
        C_{V,u}
        &C_{V,m}
        &C_{V,V}
    \end{pmatrix}
    \begin{pmatrix}
        I_N
        &0_{N,2N}
        \\
        B_{m,u}
        &B_{m,V}
        \\
        0_{2N,N}
        &I_{2N}
    \end{pmatrix}
    \begin{pmatrix}
        B_{u,f}
        &B_{u,V}
        \\
        0_{2N,N}
        &I_{2N}
    \end{pmatrix}.
\end{equation}
Every  Newton iteration for solving (\ref{eq:19}) consists of solving a system of linear equations of the form 
 \begin{equation}
    A(f^{\ell},V^{\ell})
    \begin{pmatrix}
        f^{\ell+1}- f^{\ell} \\
        V^{\ell+1}- V^{\ell} \\
    \end{pmatrix}
    =-
    \begin{pmatrix}
        \cG_f (f^{\ell},V^{\ell})\\
        \cG_V(f^{\ell},V^{\ell})
    \end{pmatrix}.
\end{equation}
In our implementation, this system is solved iteratively by BiCGStab algorithm, see \cite{MR1149111}.  Note that BiCGStab algorithm  only requires 
a function that computes
\begin{displaymath}
A(f^{\ell},V^{\ell})
\begin{pmatrix}
  \hat f\\\hat V
\end{pmatrix},
\end{displaymath}
for any grid functions $\hat f: \cT_{\Delta t}\times \Omega_h \to \RR$ and $\hat V: \cT_{\Delta t}\times \Omega_h \to \RR^2$. Ths is done using 
(\ref{eq:28}) and combining the methods described in paragraph \ref{sec:linearized-operators}.  The assembly of  the Jacobian matrix is not needed.

\section{Numerical simulations}
\label{sec:numer-simul}

We are going to discuss the results of numerical simulations in two cases, both  related to crowd motion.

\subsection{First example}
\label{sec:first-example}

\subsubsection{Description of the model}
\label{sec:description-model}

The state space is the square $\Omega=(-0.5,0.5)^2$ and the time horizon is $T=1$.

We consider the MFGC described by (\ref{eq:HJB})-(\ref{eq:BCNeum}), in which 
\begin{enumerate}
\item The kernel $k$ in (\ref{eq:defV}) is radial, i.e. of the form $K(x,y)=K_\rho(|x-y|)$. Here $r\mapsto K_\rho(r)$ is a non-increasing $C^1$ function defined on $\RR_+$, with       
  \begin{equation} \label{eq:29}
            K_\rho(r)
            =
            0,\quad  \text{ if } r\geq \rho,\qquad 
            \text{ and }\qquad 
            K_\rho(r)
            =
            1, \quad  \text{ if } r\leq 0.9\rho,
        \end{equation}
 for a positive number  $\rho>0$, which is the radius of the disc in the state space that a reprensative agent uses for computing the average of the controls.
\item The parameter $c$ in  (\ref{eq:HJB}) is chosen as  $c=10^{-3}$. Recall that the cost $cm(t,x)$ reflects the aversion of a representative player to crowded regions of the state space.
\item  At time $t=0$, the agents are distributed  in the top-right and the bottom-left corners of the domain $\Omega$, see the left part of Figure \ref{fig:CI_FC}. The density is piecewise constant, with values appearing on  Figure \ref{fig:CI_FC}.
\item The terminal cost is also piecewise constant. It takes a small value in  the top-left and the bottom-right corners of the domain $\Omega$, see the right part of Figure \ref{fig:CI_FC} for the chosen values. This cost  attracts the agents to the latter two corners of $\Omega$.
\item The function  $f_0$ in  (\ref{eq:HJB}) is chosen to be proportional to the terminal cost, with a factor $0.1$. This term is linked to the running cost and 
has the same effect as the terminal cost.
\item Recall that the boundary conditions (\ref{eq:BCNeuu})-(\ref{eq:disBCNeum}) rule out the entry or exit of agents. The total mass of $m$ is conserved.
\end{enumerate}
\begin{remark}
\label{sec:description-model-1}  
Note that the problem is symmetric with respect to the two diagonals of the square domain. The grid of the domain is chosen to have the same symmetry.
\end{remark}
\begin{remark}
  \label{sec:description-model-2}
The two parts  of the running cost:  $cm(t,X_t)$ and  $\frac{\theta}2\labs\aa-\ll V(t, X_t)\rabs^2$ 
may have competing effects: indeed, the former cost may incitate the agents to spread all over the state space, whereas 
the latter may result in the agents selecting the same controls therefore staying grouped. 
Although the constant $c= 10^{-3}$ seems small, it is chosen in such a way that the above-mentioned two costs have the same orders of magnitude  for e.g. $\theta=1$ and $\lambda=0.9$.
\end{remark}
\begin{center}
 \includegraphics[scale=0.5]{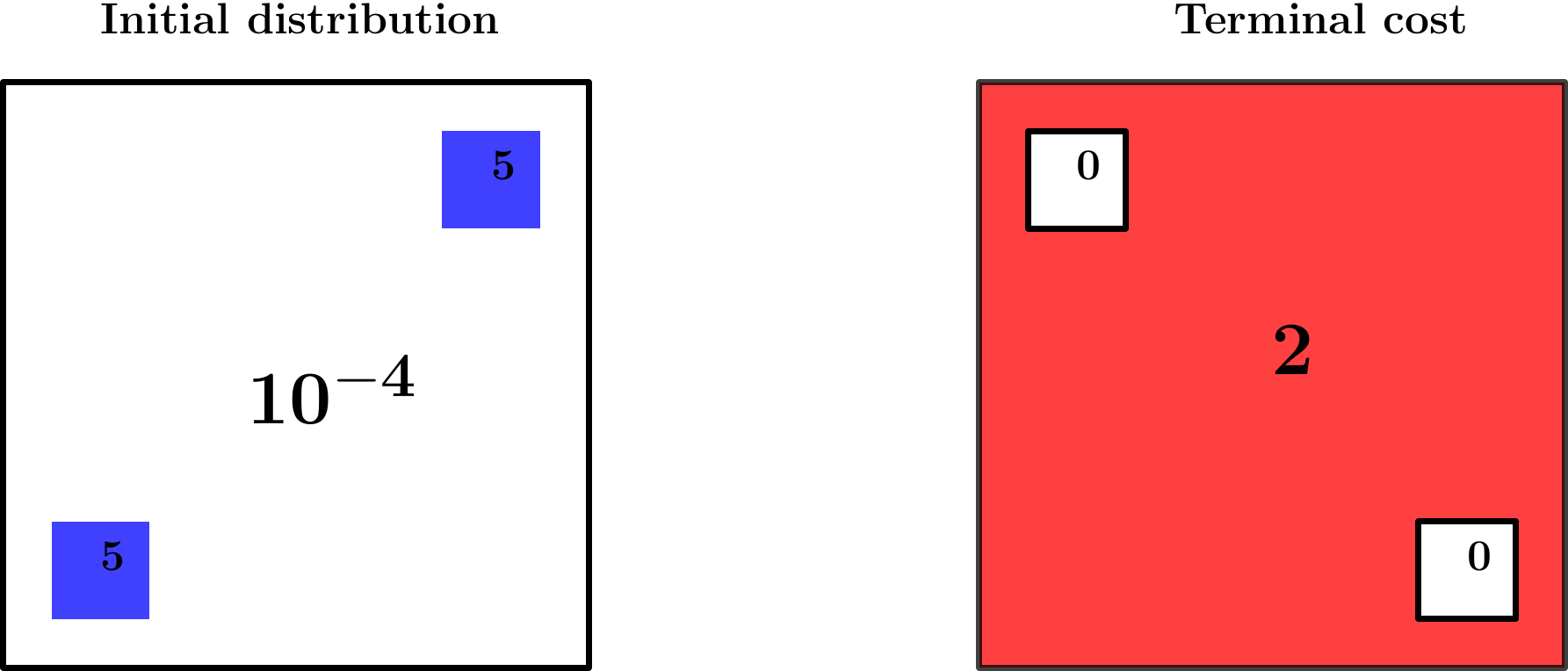}
  \captionof{figure}{Example 1. Left: the initial distribution of states. Right: the terminal cost.}
 \label{fig:CI_FC}
\end{center}

\subsubsection{The results of the simulations: discussion}
\label{sec:results-simulations}

In the present  simulation, we choose the parameters as follows:
\begin{displaymath}
 \nu=10^{-3},\qquad \theta=1,\qquad\lambda=0.9,\qquad \rho=0.2 ,\qquad c=10^{-3}. 
\end{displaymath}
On Figure \ref{fig:sym_cor_2D}, we display snapshots of  $m$, $u$ and the optimal feedback law
at several times.\\
Since both the problem and the grid are symmetric with respect to the two diagonals, $m$, $u$ and the feedback have the same symmetry for all times.
Let us describe the evolution of the distribution of states, that we can name ``{\sl gathering-kissing-splitting}'' 
referring to the ``{\sl drafing-kissing-tumbling}'' phenomenon in fluid mechanics (for the interaction of particles in a fluid).
\begin{itemize}
\item The term $cm(t, X_t)$ in the running cost prevents the part of the distribution initially supported  in one of the opposite corners (say the bottom-left corner) to travel directly to one of the targets (say the bottom-right corner). On the other hand, the interaction through controls prevents this part of the distribution to split into two equal parts which would travel directly to the two targets, because  the agents favor controls close to the local average. Therefore, 
the part of the distribution initially supported in one of the opposite corners first travels to the center of the domain. This is the {\sl gathering} phase of the evolution.
\item The two parts of the distribution reach the center of the domain, where  the local average of the velocity becomes  small;
the dominating cost then becomes the one which attracts the agents to the bottom-right and top-left corners. Because of the repulsive effect due to the part $cm(t, X_t)$ of the running cost, the part of the distribution initially supported  in one of the opposite corners (say the bottom-left corner) and having traveled toward the center of the domains splits into two parts which make each a ninety-degrees turn. This is the {\sl kissing} phase of the evolution.
\item After the {\sl kissing } phase, the distribution of states {\sl splits} into  parts which travel to the targets. 
\end{itemize}
Finally, we see that the paths followed by the agents is far from being a shortest path to the targets.
\bigskip

Let us compare these results with a simulation of the MFG  obtained by cancelling 
either $\theta$ or $\lambda$ in (\ref{eq:HJB})-(\ref{eq:BCNeum}) while keeping all the other parameters and the grid unchanged.
Since the problems remains symmetric with respect to the two diagonals, the obtained results have the same symmetry.  On Figure \ref{fig:LQ_2D}, we display snapshots of  $m$, $u$ and the optimal feedback law at several times. We see that the evolution is quite different from that displayed on Figure~\ref{fig:sym_cor_2D}, since the two parts of the distribution initially supported in two opposite corners of the domain symmetrically split into two parts each, which travels directly to the targets. The initial splitting phase is due to the coupling cost $cm(t,X_t)$ which favors the spread of the distribution. The path followed by the agents is  close to a shortest path to the targets.

\begin{center}
  \includegraphics[width=\linewidth]{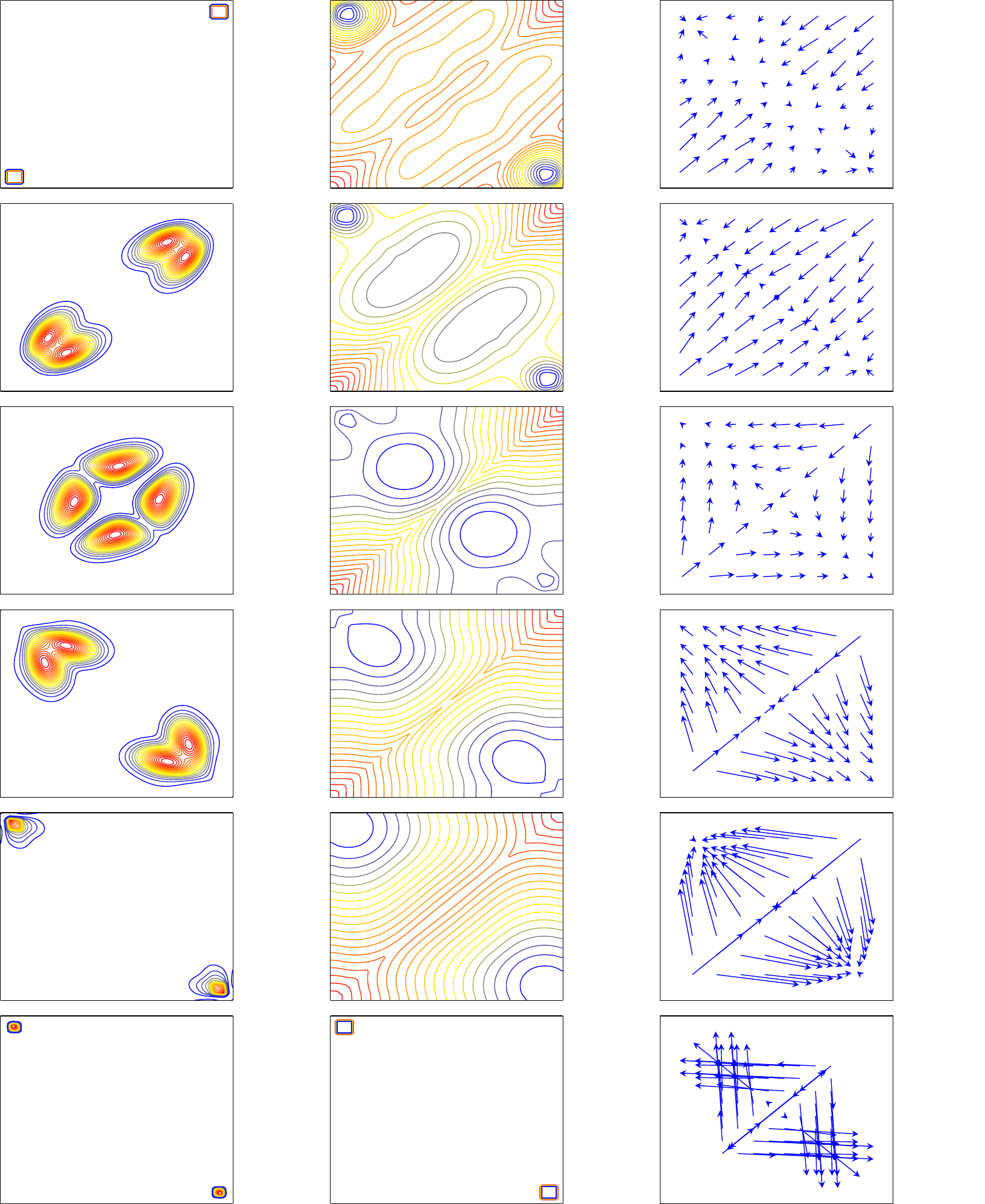}
  \captionof{figure}{Example 1. Gathering-kissing-splitting; snapshots  at  $t=0,0.2,0.4,0.6,0.8,1$. Left: contours of $m$. Center:  contours of $u$. Right: optimal feedback $\aa$.}
  \label{fig:sym_cor_2D}
\end{center}

\begin{center}
  \includegraphics[width=\linewidth]{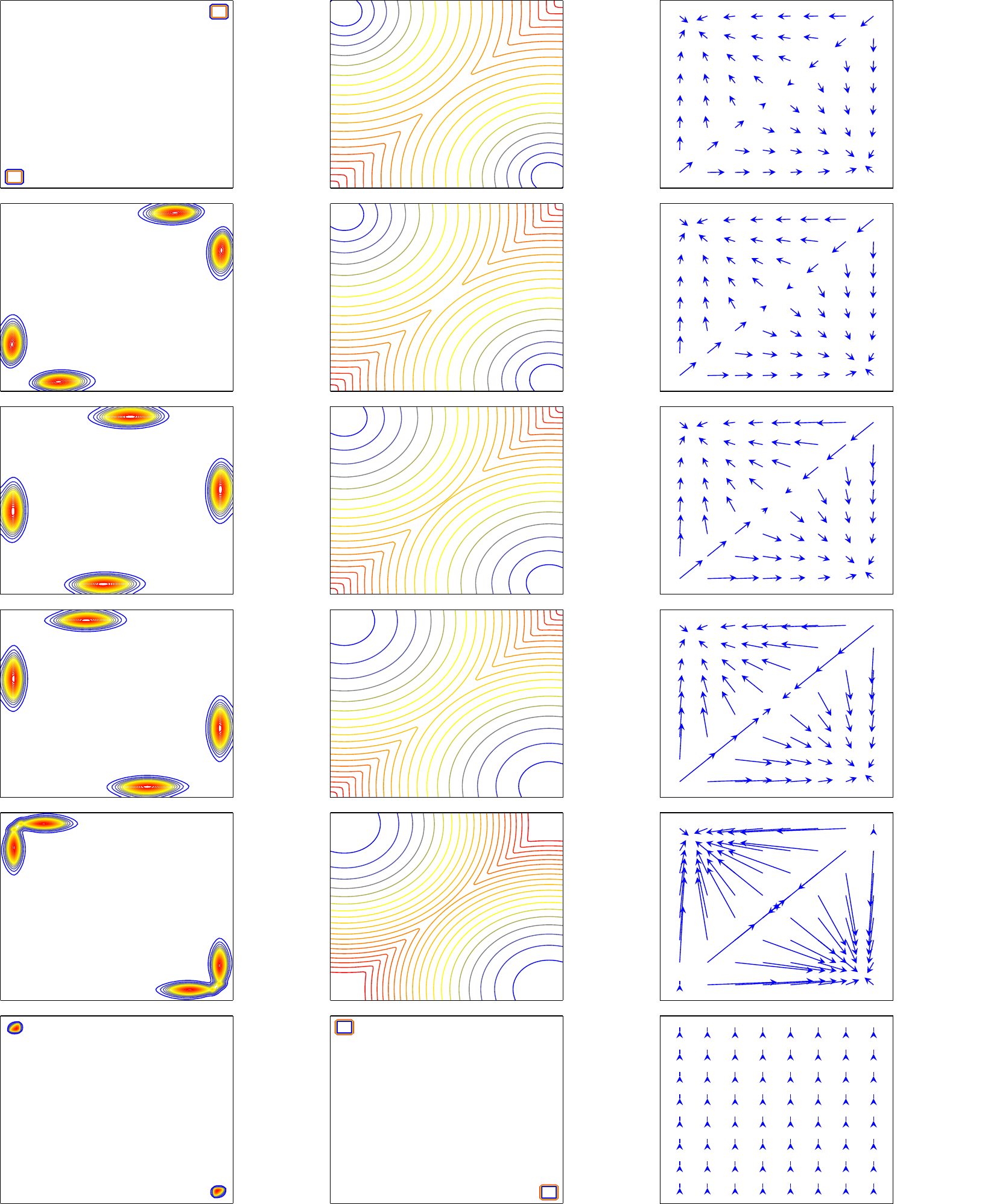}
  \captionof{figure}{Example 1. Same parameters except $\theta\lambda=0$; snapshots  at  $t=0,0.2,0.4,0.6,0.8,1$. Left: contours of $m$. Center:  contours of $u$. Right: optimal feedback $\aa$.}
  \label{fig:LQ_2D}
\end{center}

\subsubsection{Non-uniqueness of solutions}
\label{sec:non-uniq-solut}
The solution of the MFGC displayed on Figure~\ref{fig:sym_cor_2D} is likely to be unstable because 
the paths followed by the agents are significantly longer than the shortest paths to the targets. We expect that there are other solutions. We are going to see that this is indeed the case. 
For that purpose, we are going to introduce a vanishing perturbation of the initial distribution which breaks the symmetry of the problem. 
 This will lead to  different solutions to (\ref{eq:HJB})-(\ref{eq:BCNeum}). An example of  perturbation is displayed on Figure \ref{fig:evan_mass}. It consists of adding very little mass at the top and the bottom of the domain; the perturbed distribution is no longer symmetric with respect to the  diagonals. 
We expect that the agents initially distributed near  the  bottom target will go to the right, and that all the agents initially distributed at the bottom of the domain will follow them, because of the interaction through controls. Similarly, we expect that all the agents initially located at the top of the domain will go to the left.
\\
In our simulations, we use a continuation method, i.e. we consider perturbations corresponding to a decreasing sequence of nonnegative parameters $(\pi_n)_{n\in 0,\dots, N}$.  The last value $\pi_N=0$ corresponds to the distribution displayed on the left of Figure \ref{fig:CI_FC}.
For each new value $\pi_{n+1}$ of the parameter, we initialize the Newton iterations described in Section \ref{sec:newt-algor-solv} by the solution corresponding to the preceding value $\pi_{n}$. For positive values of $\pi_n$, the simulated solution is not symmetric with respect to the diagonals, and this property is preserved for the last value $\pi_{N}=0$.
\begin{center}
  \includegraphics[scale=0.5]{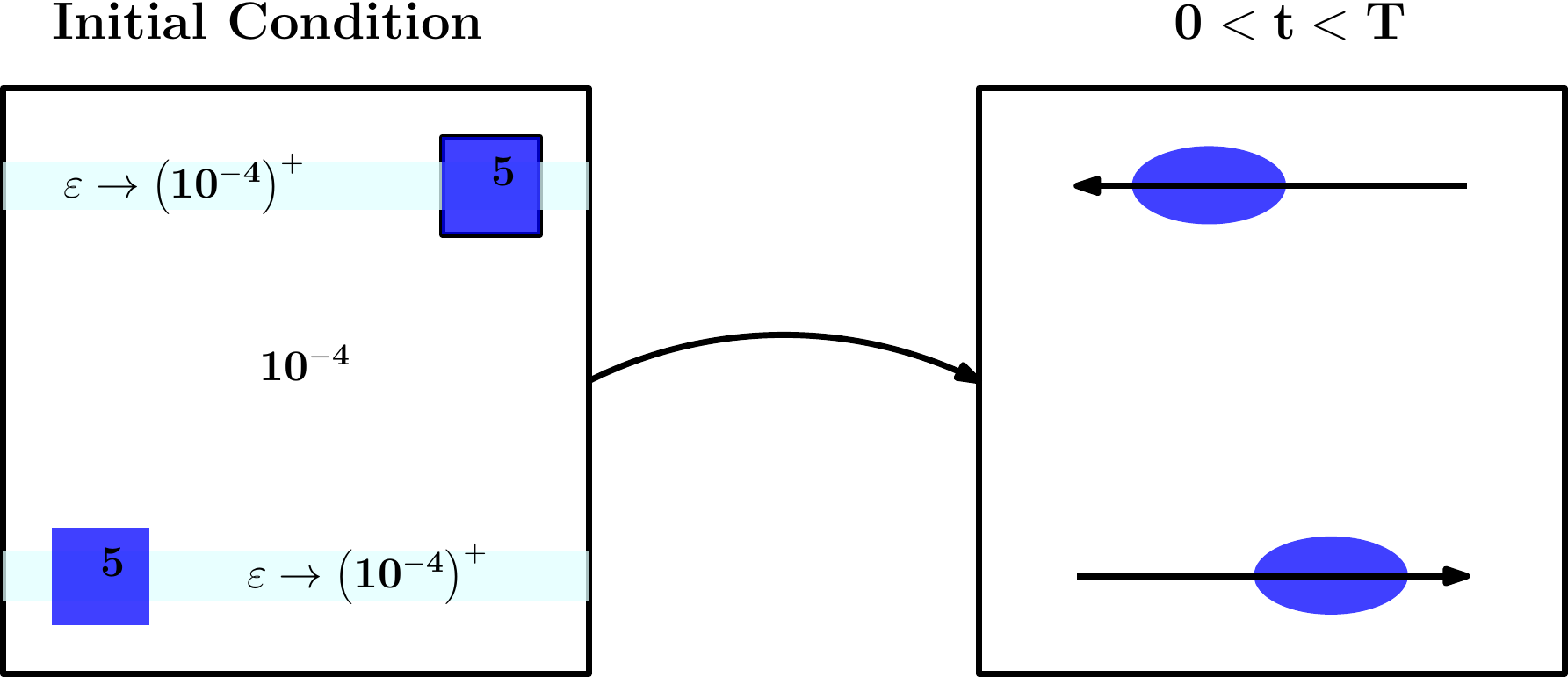}
  \captionof{figure}{Example 1. A continuation method leading to a different solution. Left:  a vanishing perturbation of the initial distribution. Right:  the expected evolution of the distribution.}
\label{fig:evan_mass}
\end{center}

On Figure~\ref{fig:non_uniq}, we display three different solutions (the distribution of states at different times) obtained with three different sequences of vanishing perturbations of the initial distribution displayed on Figure \ref{fig:CI_FC}: the solution displayed on the left corresponds to the sequence of perturbations displayed on Figure  \ref{fig:evan_mass}; applying to the pertubation a symmetry with respect to a diagonal,
 we obtain the solution displayed on the center; 
the solution displayed on the right is obtained by another kind of perturbation symmetric with respect to one diagonal only, located at the top and at the left of the domain: for this solution, we again notice gathering and kissing phases, and that the paths followed by the agents deviate from the shortest ones.

\begin{center}
  \includegraphics[width=\linewidth]{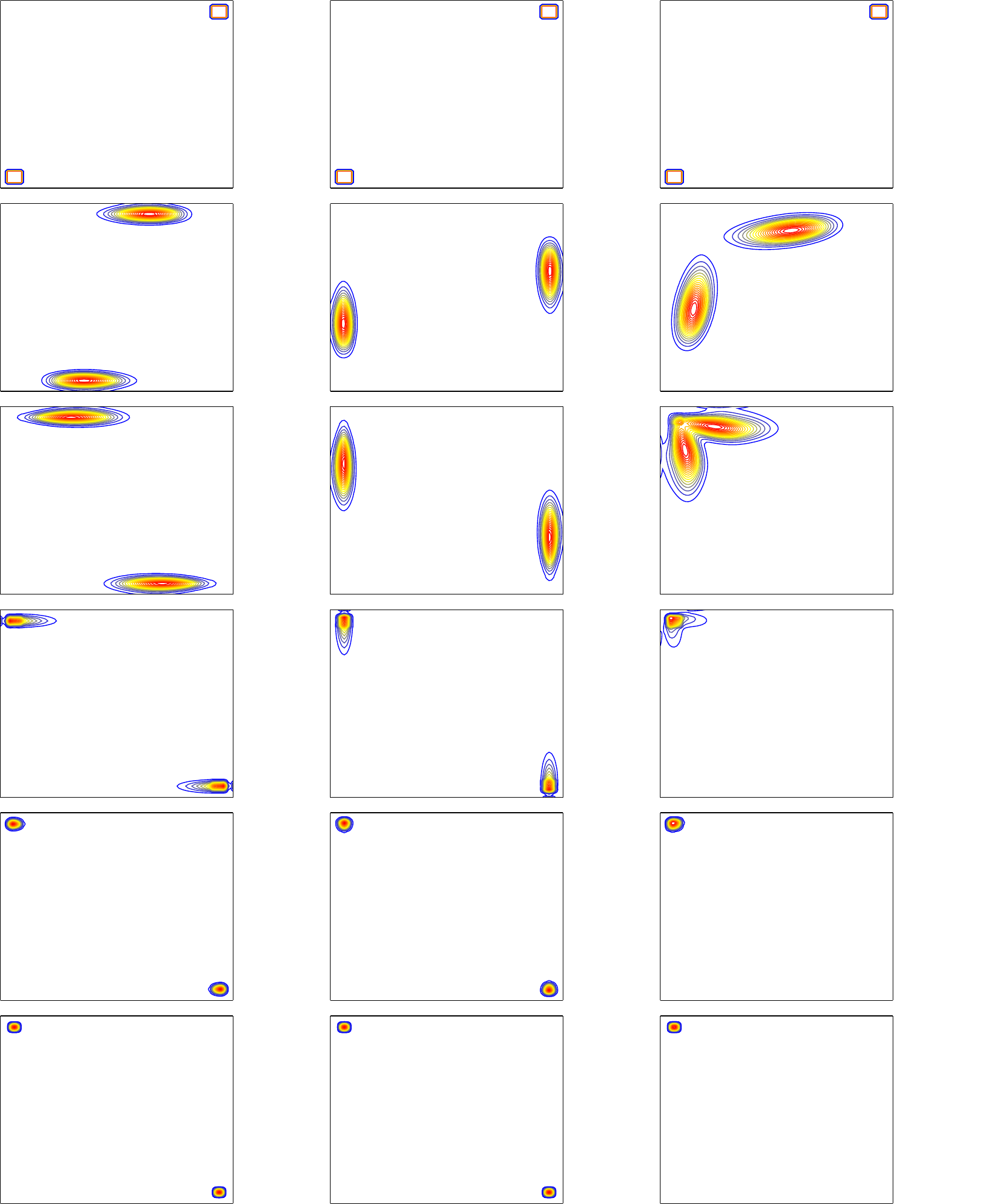}
  \captionof{figure}{Example 1. Different solutions obtained by adding  vanishing perturbations  to the initial distribution of states. The distribution  at time $t=0,0.2,0.4,0.6,0.8,1$.  The three columns correspond to three different sequences of perturbations.}
  \label{fig:non_uniq}
\end{center}
 
\subsubsection{Behaviour of the algorithm}
\label{sec:behaviour-algorithm}

In this paragraph, we investigate how the behaviour of the algorithm described in Section~\ref{sec:newt-algor-solv} is affected by  the variations of the 
parameters in the model. Recall that the algorithm is based on Newton iterations with a BiCGStab inner loop at each step. 

We start by the grid size and the viscosity parameter $\nu$: in the experiments reported below, 
 the stopping criterion for the outer Newton iteration is that the
 normalized Euclidean norm of the residual is smaller than $10^{-8}$.
 The  stopping criterion for the inner BiCGStab iterations is that the ratio of the norms
 of the current   and the initial residuals  is smaller than $10^{-7}$   (hence, it involves a relative error).  The parameters of the model are set to 
 \begin{displaymath}
    \th=1, \qquad \ll=0.9, \qquad c=10^{-3},\qquad  \rho=0.2,\qquad  L=1.
 \end{displaymath}
 Table   \ref{tab:1} displays  the iterations counts  for the  outer Newton  and the inner BiCGStab loops for different viscosities and grid sizes.
 We notice that the number of iterations is not very sensitive to the grid size, as expected because 
the map $\cG-I_d$,  with $\cG$ defined in (\ref{eq:20})~(\ref{eq:21}), is the discrete version of a map which has  compactness properties.
    The iteration counts increase  as $\nu$ is decreased.

 \begin{table}[htbp]
   \centering
   \caption{    Number of outer Newton iterations and average number of inner BiCGStab iterations per Newton step (the stopping criteria are given in the text)    with different viscosities and grid sizes. }
     \begin{tabular}[c]{|l|c|c||c|c||c|c||c|c|}
    \hline
    $\nu$
    &\multicolumn{2}{c|}{$26\times26\times26$}
    &\multicolumn{2}{c|}{$51\times51\times51$}
    &\multicolumn{2}{c|}{$76\times76\times76$}
    &\multicolumn{2}{c|}{$101\times101\times101$}
    \\
    \hline
    & Newton & BiCGStab
    & Newton & BiCGStab
    & Newton & BiCGStab
    & Newton & BiCGStab
    \\
    \hline
    $0.5$ & $3$ & $13.67$
    & $3$ & $12.67$ 
    & $3$ & $15$
    & $3$ & $14.33$
    \\
    \hline
    $0.1$ & $9$ & $14.67$
    & $10$ & $21.9$ 
    & $12$ & $15.08$
    & $12$ & $18.25$
    \\
    \hline
    $0.05$ & $4$ & $16.5$
    & $4$ & $17.25$ 
    & $4$ & $16.25$
    & $4$ & $19.25$
    \\
    \hline
    $0.01$ & $4$ & $42.5$
    & $4$ & $29$ 
    & $5$ & $30.4$
    & $5$ & $28.8$
\\
    \hline
   \end{tabular}
 \label{tab:1}
 \end{table}

Table \ref{tab:2}
 displays the iteration counts for different choices of $\ll$ and $\th$,
and  fixed values the other parameters:
\begin{displaymath}
   \nu=10^{-3},  \qquad c=10^{-3},\qquad  \rho=0.2,\qquad L=1,
\end{displaymath}
with a $101\times 101$ nodes grid and $101$ time steps.

In fact, the  numbers in Table~\ref{tab:2} are obtained by running a continuation method in $\theta$ and $\lambda$:
for each cell of the table, the solution corresponding either to the left
or the upper neighboring cell is used as an initial guess for the Newton iterations. If a cell has two such neighbors, the choice of the initial guess is made as follows: in the top-right triangular part of the table, strictly above the diagonal, we choose the initial guess corresponding to the left neighboring cell. In the bottom-left triangular part of the table, including the diagonal, we choose  the initial guess corresponding to the cell immediately above.
% both a left and neighbor 
% and by using an already computed solution   as an initial guess;
% more precisely, in order to fill a new case in the table, say for $(\bar \theta, \bar \lambda)$,  we use as an initial guess the solution corresponding to an already filled case for which the value of $\theta\lambda$ is closer to $\bar \theta\bar\lambda$. If there is more than one such cases, we use the pair $(\theta,\lambda)$ such that $\lambda$ is closer to $\bar \lambda$.
\\
 We see that changing $\ll$ and $\th$ mostly impacts  the number of iterations
of the inner loop. A reason for that is that $V$ is obtained by solving a  linear fixed point problem
 whose contraction factor is $\ll\th$. Hence, it is sensible that the number  
 of iterations necessary to solve the systems of linear equations  arising in the Newton steps increases as $\ll\th$ tends to $1$.
\begin{table}[htbp]
  \begin{center}
 \caption{\label{tab:2} Sensitivity to $\lambda$ and $\theta$.  }
    \begin{subtable}{0.45\textwidth}
      \begin{center}
        \subcaption{Average number of BiCGStab iterations.}
    \begin{tabular}{|l|c|c|c|c|c|}
        \hline
        \backslashbox{$\ll$}{$\th$} & $0.2$ 
        & $0.4$ & $0.6$ & $0.8$ & $1$
        \\
        \hline
        $0.2$ & $3.5$ & $4.5$
        & $7$ & $8$ & $8$
        \\
        \hline
        $0.4$ & $4.5$ & $7$
        & $8$ & $9$ & $10.67$
        \\
        \hline
        $0.6$ & $5$ & $8$
        & $9$ & $12$ & $15.67$
        \\
        \hline
        $0.8$ & $7$ & $8.67$
        & $10.67$ & $15.67$ & $27.25$
        \\
        \hline
        $0.9$& $7.33$ & $8.67$ 
        & $12$ & $18$ & $40$
        \\
        \hline
    \end{tabular}
  \end{center}
\end{subtable}
    \begin{subtable}{0.45\textwidth}
      \begin{center}
        \subcaption{number of Newton iterations.}
    \begin{tabular}{|l|c|c|c|c|c|}
        \hline
        \backslashbox{$\ll$}{$\th$} & $0.2$ 
        & $0.4$ & $0.6$ & $0.8$ & $1$
        \\
        \hline
        $0.2$ & $2$ & $2$
        & $2$ & $2$ & $2$
        \\
        \hline
        $0.4$ & $2$ & $3$
        & $2$ & $3$ & $3$
        \\
        \hline
        $0.6$ & $2$ & $3$
        & $3$ & $3$ & $3$
        \\
        \hline
        $0.8$ & $3$ & $3$
        & $3$ & $3$ & $4$
        \\
        \hline
        $0.9$ & $3$ & $3$
        & $3$ & $3$ & $6$
        \\
        \hline
    \end{tabular}
  \end{center}
\end{subtable}
  \end{center}
\end{table}

Recall that we use a continuation method, consisting of decreasing progressively $\nu$ until it reaches the desired value;
in Figure \ref{fig:BiCG_params} (respectively Figure \ref{fig:New_params}), we  plot the average number of BiCGStab iterations versus $\nu$,
(respectively the number of Newton iterations versus $\nu$)
 for different values of $\ll$, $\th$ and $c$. The stopping criteria have been described above. The grid contains $101\times 101$ nodes  and there are $101$ time steps.
 We recover the information contained in Tables   \ref{tab:1} and  \ref{tab:2}, i.e. that $\nu$  mostly impacts the number of iterations in the inner loop.  We also notice that when $\nu$ is large, the number of iterations seems to depend more on  $\ll$ and $\theta$ than on $c$ and that it becomes highly sensitive to $c$ when $\nu$ is small, but we do not really know how to explain this. The number of iterations in the outer Newton loop seems much less sensitive than the number of inner BiCGStab iterations. Looking at the overall number of BiCGStab iterations, they are not very different for the choices $(\ll,\th,c)=(0.9,1,0)$ and $(\ll,\th,c)=(0,0,0.1)$, since  the latter case needs more Newton iterations but less BiCGStab iterations per Newton steps.

 \begin{center}
   \includegraphics[scale=0.7]{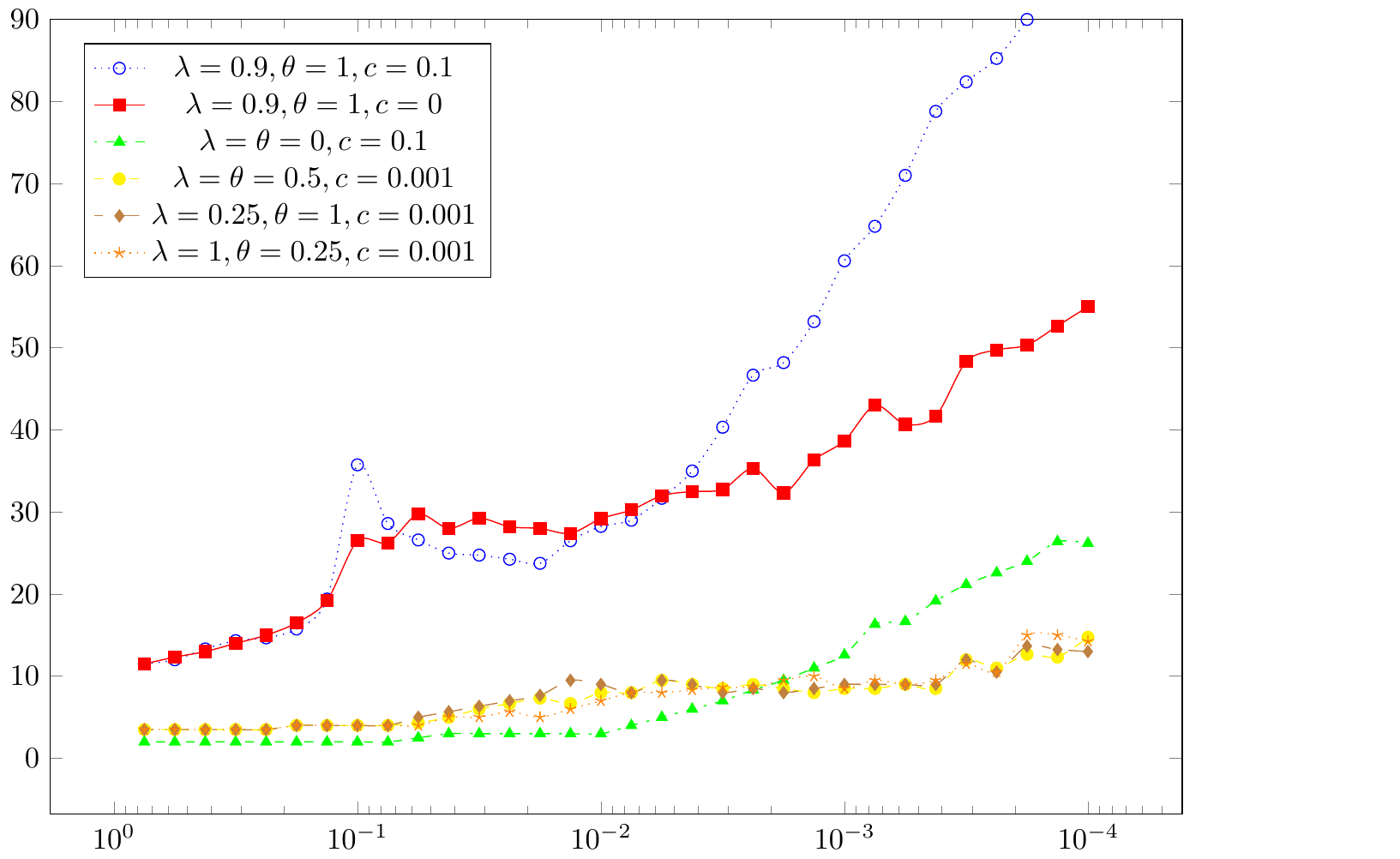}
   \captionof{figure}{\label{fig:BiCG_params}    Average number of iterations of BiCGStab iterations per Newton step  versus $\nu$ for different choices of $\lambda$, $\theta$ and $c$.  }
   \end{center}

   \begin{center}
       \includegraphics[scale=0.7]{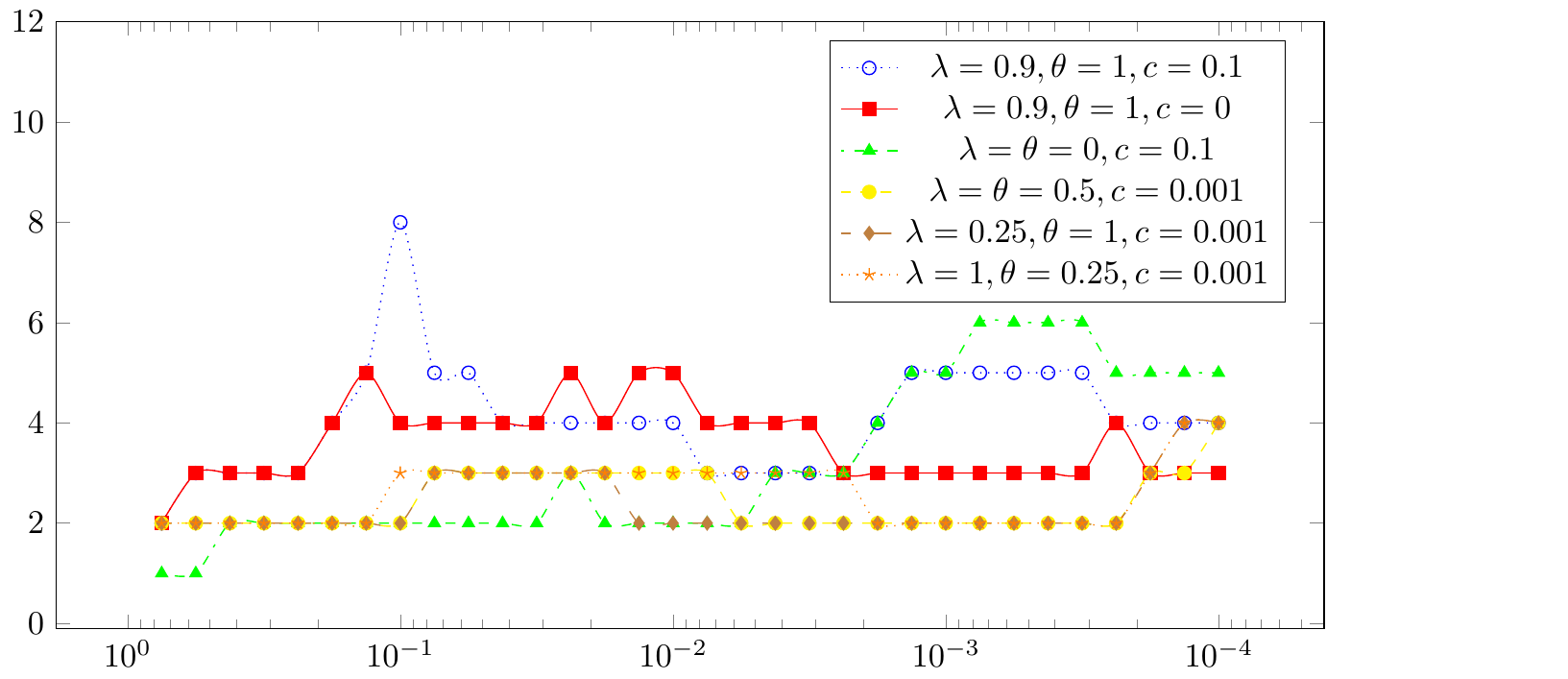}
    \captionof{figure}{\label{fig:New_params} Number of iterations of  Newton iterations  versus $\nu$ for different choices of $\lambda$, $\theta$ and $c$.    }
\end{center}

Finally, we investigate the sensitivity of the algorithm to  $L$, the number of fixed point iterations yielding a proxy of $V$, see
paragraph~\ref{sec:coupl-terms-aver}.
Table \ref{tab:3} contains  the iteration counts of the inner and outer loops for different values of
$\nu$ and $L$, and two choices
 of $(\lambda,\theta)$ (all the other parameters are fixed
$c=10^{-3}$, $\rho=0.2$).  We notice that the choice of $L$ seems to have little impact on the iteration count.  For these reasons, $L=1$ appears to be a good choice.

% In Figure \ref{fig:BiCG_L}, we 
% plot the iteration count of the inner loop as a function of $\nu$ for different values of $L$  (all the other parameters are fixed and $\lambda=\theta=0.8$, $c=10^{-3}$).
%Although we do not supply the graph here, the number of Newton iterations also seems to be insensitive to $L$.

\begin{comment}
\begin{center}
    \includegraphics[scale=0.7]{}
    \captionof{figure}{\label{fig:BiCG_L}    Average numbers of iterations of BiCGStab  versus $\nu$ for different values of $L$   }
\end{center}
\end{comment}

\begin{table}[htbp]
   \centering
   \caption{    Number of outer Newton iterations and average number of inner BiCGStab iterations per Newton step (the stopping criteria are given in the text)    with different viscosities and $L$.}
     \begin{tabular}[c]{|l|c|c||c|c||c|c||c|c|}
         \hline
         \multicolumn{9}{|c|}{$\ll=\th=0.8$}
         \\
    \hline
    \backslashbox{$\nu$}{$L$}
    &\multicolumn{2}{c|}{$1$}
    &\multicolumn{2}{c|}{$2$}
    &\multicolumn{2}{c|}{$3$}
    &\multicolumn{2}{c|}{$5$}
    \\
    \hline
    & Newton & BiCGStab
    & Newton & BiCGStab
    & Newton & BiCGStab
    & Newton & BiCGStab
    \\
    \hline
    $0.1$ & $3$ & $7.33$
    & $3$ & $6$ 
    & $3$ & $6$
    & $3$ & $5.33$
    \\
    \hline
    $0.01$ & $3$ & $13.67$
    & $3$ & $13$ 
    & $3$ & $15.33$
    & $3$ & $13.67$
    \\
    \hline
    $0.001$ & $2$ & $14.5$
    & $2$ & $14.5$ 
    & $2$ & $17.5$
    & $2$ & $16$
    \\
    \hline
    $0.0001$ & $3$ & $22$
    & $3$ & $22.67$ 
    & $3$ & $22.33$
    & $3$ & $24.67$
\\
    \hline
         \multicolumn{9}{|c|}{$\ll=0.9$, $\th=1$}
         \\
    \hline
    \backslashbox{$\nu$}{$L$}
    &\multicolumn{2}{c|}{$1$}
    &\multicolumn{2}{c|}{$2$}
    &\multicolumn{2}{c|}{$3$}
    &\multicolumn{2}{c|}{$5$}
    \\
    \hline
    & Newton & BiCGStab
    & Newton & BiCGStab
    & Newton & BiCGStab
    & Newton & BiCGStab
    \\
    \hline
    $0.1$ & $4$ & $26.75$
    & $4$ & $19.75$ 
    & $4$ & $19.5$
    & $4$ & $19.25$
    \\
    \hline
    $0.01$ & $4$ & $29$
    & $4$ & $25.75$ 
    & $5$ & $28$
    & $5$ & $29$
    \\
    \hline
    $0.001$ & $3$ & $36.67$
    & $3$ & $34$ 
    & $3$ & $38.67$
    & $3$ & $37.67$
    \\
    \hline
    $0.0001$ & $3$ & $55$
    & $3$ & $58.33$ 
    & $3$ & $64$
    & $3$ & $63.33$
    \\
    \hline
   \end{tabular}
 \label{tab:3}
 \end{table}

% \begin{table}[htbp]
%   \begin{center}
%  \caption{\label{tab:4} Sensitivity to $\rho$.  }
%     \begin{subtable}{0.45\textwidth}
%       \begin{center}
%         \subcaption{Average number of BiCGStab iterations.}
%     \begin{tabular}{|l|c|c|c|c|c|}
%         \hline
%         \backslashbox{$\nu$}{$\rho$} & $0.1$ 
%         & $0.2$ & $0.3$ & $0.4$ & $0.5$
%         \\
%         \hline
%         $0.1$ & $23.25$ & $26.75$
%         & $21.8$ & $18.5$ & $15$
%         \\
%         \hline
%         $0.01$ & $32$ & $29$
%         & $27.6$ & $31$ & $34.2$
%         \\
%         \hline
%         $0.001$ & $45$ & $36.67$
%         & $34.67$ & $41.67$ & $70$
%         \\
%         \hline
%         $0.0001$ & $62.33$ & $58.33$
%         & $47.33$ & $$ $ & $59.67$
%         \\
%         \hline
%     \end{tabular}
%   \end{center}
% \end{subtable}
%     \begin{subtable}{0.5\textwidth}
%       \begin{center}
%         \subcaption{number of Newton iterations.}
%     \begin{tabular}{|l|c|c|c|c|c|}
%         \hline
%         \backslashbox{$\nu$}{$\rho$} & $0.1$ 
%         & $0.2$ & $0.3$ & $0.4$ & $0.5$
%         \\
%         \hline
%         $0.1$ & $4$ & $4$
%         & $5$ & $4$ & $3$
%         \\
%         \hline
%         $0.01$ & $4$ & $4$
%         & $5$ & $4$ & $5$
%         \\
%         \hline
%         $0.001$ & $3$ & $3$
%         & $3$ & $3$ & $3$
%         \\
%         \hline
%         $0.0001$ & $3$ & $5$
%         & $3$ & $0$ & $3$
%         \\
%         \hline
%     \end{tabular}
%   \end{center}
% \end{subtable}
%   \end{center}
% \end{table}

\subsection{Second example}\label{sec:second-example}

As a  second example, we consider a model for a crowd crossing a  hall, with an entrance at the left and an exit at the right.
Roughly speaking, a generic agent will interact with those located in a cone ahead of her. 

\subsubsection{Description of the model}
\label{sec:description-model-3}

The state space is the rectangle $\Omega=(-1,1)\times (-0.1,0.1)$ and the time horizon is $T=8$.  Let  $\ell$ denote the length of the bottom and upper sides of $\partial \Omega$: $\ell=2$.

The boundary of $\Omega$ is  split into three parts $\Gamma_{ND}$, $\Gamma_{NN}$ and $\Gamma_{DD}$   which respectively stand for an entrance, some walls and an exit:
\begin{equation}
  \label{eq:31}
   \Gamma_{DD}=  \{1\}\times  [-0.05,0.05],\qquad    \Gamma_{ND}=   \{-1\}\times  [-0.05,0.05] , \qquad       \Gamma_{NN}=\partial\O\backslash\lp\Gamma_{DD}\cap\Gamma_{ND}\rp,
\end{equation}
and we consider the following boundary conditions:
\begin{eqnarray}
\label{eq:32}
\left.   \begin{array}[c]{rcr}
     \frac{\partial u}{\partial n}(t,x)&=&  0\\
     \nu\frac{\partial m}{\partial n}(t,x) -\ll\th  m(t,x) V(t,x)\cdot n(x)&=&0 
  \end{array}
\right\} \qquad &\text{on } &\Gamma_{NN}, \\
\label{eq:33}
\left.   \begin{array}[c]{rcr}
     u(t,x) &=& 6 \\
\nu \frac {\partial m }{\partial n}(t,x)+ m(t,x)H_p(\nabla u(t,x), V(t,x)))\cdot n(x)&=& -2
%{\color{red}     \frac{\partial m}{\partial n}(t,x)????}&=&  2\\
  \end{array}
\right\} \qquad &\text{on } &\Gamma_{ND}, \\
\label{eq:34}
\left.   \begin{array}[c]{rcr}
     u(t,x) &=& -4\\
     m(t,x)&=&  0
  \end{array}
\right\} \qquad &\text{on } &\Gamma_{DD},
\end{eqnarray}
where $H$ is defined below.
%{\color{red}In fact, I wonder if the Neumann condition for $m$  is not rather $-\nu \frac {\partial m }{\partial n}+ mH_p(x, \nabla u, V)\cdot n= 2$?}

The system of PDEs is still (\ref{eq:HJB})-(\ref{eq:defZ}), with the following new features:
\begin{itemize}
\item compared to (\ref{eq:1}), the part ot the running cost accounting for the interaction through controls is multiplied by a positive normalization factor $a$:
\begin{equation}
    \label{eq:defL}
    \begin{aligned}
        L(\aa,V)=        a\frac{\th}2|\aa-\ll V|^2        +a\frac{1-\th}2|\aa|^2,
    \end{aligned}
\end{equation}
and the associated Hamiltonian by the Legendre's transform is
\begin{equation}\label{eq:30}
    H(p,V)
    =
    \frac{a}{2}
    \labs\frac{p}a-\ll\th V\rabs^2
    -a\frac{\ll^2\th}2|V|^2.
\end{equation}
We  choose the  normalization factor  $a$ as follows:
 \begin{equation}
\label{eq:6}    a    = \ati     (1-\ll^2\th)^{-1},
\end{equation}
where $\ati$ is a positive constant (independent of $\lambda$ or $\theta$). As we shall see later, the normalization allows us to compare the solutions 
obtained with different values of  $\ll$ and $\th$. We shall also see that this normalization is  specific to the example under consideration and may not be relevant in other situations.
\item The kernel $K$ is given by 
  \begin{equation}\label{eq:35}
    K(x,y)=  \mathbf{1}_{y_1\geq x_1} K_\rho(|x-y|) \kappa( |\omega| ),
  \end{equation}
where $\omega$ is the angle made by the vector $y-x$ with the vector $(1,0)$ and
\begin{enumerate}
\item $k_\rho$ is defined  as in the first example
\item $\kappa$ is a nonincreasing $C^1$   function defined  such that 
  \begin{displaymath}
    \kappa=\left\{
      \begin{array}[c]{lcr}
        1,\quad & \text{if }& 0\le \omega\le 0.9 \,\omega_0,\\
        0,\quad & \text{if }& \omega \ge  \omega_0,
      \end{array}
 \right.
  \end{displaymath}
\end{enumerate}
for a given angle $\omega_0\in (0,\frac \pi 2)$.
\item The parameter $c$ in (\ref{eq:HJB}) is chosen to be zero
\item We shall consider various functions $f_0$ modeling the cost for staying at a given point in the domain.
\end{itemize}
The initial condition is $m_0(x)=10^{-4}$ for all $x\in \Omega$, and the terminal cost is $0$.

\subsubsection{The case when $f$ is constant: comparisons with a  one-dimensional problem}
\label{sec:simple-case-related}
In this paragraph, we choose $f_0(x)=F$ for all $x\in \Omega$. % and $\ct=2$ in (\ref{eq:defL}).
\paragraph{Simplification: a one-shot one-dimensional game}
\label{sec:simpl-one-dimens}
We approximate the MFGC by a  one-shot one-dimensional game.  The state space is the interval $[0,\ell]$. The points $x=0$ and $x=\ell$ respectively stand for an entrance and  an exit. When she enters the domain, a representative agent chooses her drift $\alpha\in \RR_+$ once and for all, and her dynamics is given by  $  x'(t)=\aa$ for $t= \ell/\alpha$. The distribution of drifts is a probability measure  $\mu$ on $\RR_+$ and the average drift is $V=\int_{\alpha\in \RR_+} \alpha d\mu(\alpha)$. There is no entry or exit costs.
\\
With  the running cost $L(\alpha,V)+F$, where $L$ is given by (\ref{eq:defL}), the mean field Nash equilibrium reads
\begin{displaymath}
  \text{support}({\mu})\subset \text{argmin}_{\alpha}   \frac {\ell}{\alpha} 
\left(        a\left(\frac{\theta}2(\alpha-\lambda V)^2        +\frac{1-\theta}2 \alpha^2 \right) +F \right) .
\end{displaymath} 
 Given $V$, one checks that the unique solution of the minimization problem in $\alpha$ is 
 \begin{equation}
   \label{eq:39}
\alpha^*(V) = \left( \frac {2F+a\theta \lambda^2 V^2} a \right)^{\frac 1 2 }.
 \end{equation}
Hence $\mu= \delta_{\alpha ^*(V)}$, and  the static mean field  Nash equilibrium reads
 \begin{displaymath}
   V= \left( \frac {2F+a\theta \lambda^2 V^2} a \right)^{\frac 1 2 },
 \end{displaymath}
which yields 
\begin{equation}
  \label{eq:36}
\alpha^*= V= \left( \frac {2F} {a (1-\theta \lambda^2)}\right)^{\frac 1 2 }.
\end{equation}
This leads us to choose $a=\widetilde a / (1-\theta \lambda^2)$, which yields $\alpha^*= V= \sqrt{\frac {2F } {\widetilde a}}$.
Note that the value corresponding to the mean field Nash equilibrium is 
\begin{equation}
\label{eq:37}  
        J_{\text{MFG}} =  \frac {\ell}{\alpha^*} 
\left(        a\left(\frac{\theta}2(\alpha^*-\lambda \alpha^*)^2        +\frac{1-\theta}2 (\alpha^*)^2 \right) +F \right) 
       =
        \ell\frac{1-\lambda \theta}{1-\lambda^2\theta}\sqrt{2F\ati}.
\end{equation}

\begin{remark}
  \label{sec:simpl-one-shot-1}
Note that if $\theta=\lambda=1$, then (\ref{eq:39}) implies that $\alpha^* (V)>V$, while an equilibrium corresponds to $V=\alpha^*(V)$. Thus, there is no mean field equilibrium if $\theta=\lambda=1$.
\end{remark}
\begin{remark}[Comparison with the one-shot mean field type control problem]
  \label{sec:simpl-one-shot}
The Nash equilibrium must be distinguished from the situation in which
the drift of the agents is determined in order to minimize the global cost; the latter problem belongs to the class of {\sl mean field type control problems} (MFTC for short)) studied  by A. Bensoussan and his collaborators, \cite{MR3134900} and R. Carmona and F. Delarue, \cite{MR3395471}. \\
In this case, the problem reads
\begin{displaymath}
\inf_{\mu}  \int_{\alpha \in \RR_+}      \frac {\ell}{\alpha} 
\left(        a\left(\frac{\theta}2\left (\alpha-\lambda  \int_{\beta \in \RR_+} \beta d\mu(\beta)\right)^2        +\frac{1-\theta}2 \alpha^2 \right) +F \right)   d\mu (\alpha).
\end{displaymath}
Looking for the solution as the Dirac mass $\delta_{\alpha^{**}}$, we see that $\alpha^{**}$  minimizes 
\begin{displaymath}
  \alpha\mapsto    \frac {\ell}{\alpha}  \left(        a\left(\frac{\theta}2\left (\alpha-\lambda  \alpha \right)^2       
 +\frac{1-\theta}2 \alpha^2 \right) +F \right),
\end{displaymath}
thus
\begin{displaymath}
  \alpha^{**}= \left(    \frac {2F}  {a( 1-2\theta\lambda +\theta\lambda^2)}\right)^{\frac 1 2 }.
\end{displaymath}
The value of the problem is 
    \begin{equation*}
      %  \begin{aligned}
            J_{\text{MFTC}}
            =
            \ell\sqrt{2Fa(1-\lambda\theta(2-\lambda))}
            \leq
            \frac{\ell}2\sqrt{2Fa}
            \frac{1-\lambda \theta (2-\lambda)}{\sqrt{1-\lambda^2\theta}}
            +\frac{\ell}2\sqrt{2Fa(1-\lambda^2\theta)}            
            =J_{\text{MFG}},
     %   \end{aligned}
    \end{equation*}
from Young's inequality $y\leq \frac{y^2}{2z}+\frac{z}2$, with $y=\sqrt{1-\lambda\theta(2-\lambda)}$    and $z=\sqrt{1-\lambda^2\theta}$.
 The value of the MFTC is lower than the value of the MFG,
 which is  natural since in the former case, the agents are collaborating to  minimize the global cost.
Moreover, the values of the two problems coincide only if 
$\lambda \theta=0$ , i.e. when there are no interactions through controls.
\end{remark}

\paragraph{Comparison with the one-shot one-dimensional problem}
We have made numerical simulations of the MFGC described in paragraph \ref{sec:description-model-3} with $f_0(x)=F=1$ for all $x\in \Omega$, $a=\ati(1-\ll^2\th)^{-1}$ and $\ati=2$. We wish to compare the results with the explicit formulas obtained above for the one-shot one-dimensional MFG. If the latter problem is a good approximation of the former one,  we should find  $\alpha_1$  close to $ \sqrt{\frac{2F}{\ati}}=1$, therefore mostly independent of $\lambda$ and $\theta$. %and with last-time crossing to be near $T-T_{LC}=8-4\frac{1-\ll\th}{1-\ll^2\th}$.
Note that the approximation by the one-shot one-dimension mean field game is sensible only if the time remaining to the horizon is large enough, i.e. significantly larger than $\ell/\alpha^*=\ell$. If $T-t$ is small, then the optimal strategy for a representative player located on the left part of the domain is not to move.

On Figure \ref{fig:comp1D}, we display the norm of the optimal feedback for several values of $\lambda$ and $\theta$.
We see that the optimal drift is close to $1$ in agreement with the explicit formula found for the simplified  one-shot one-dimensional MFG.
We also see that the agents located at the bottom and top of $\Omega$ head toward the center of the domain (i.e. $x_2=0$) before reaching 
the exit. Thus, the  second coordinate of $\alpha$  gets large and there are singularities on both sides of the exit, whose amplitude increases
 as $\lambda\theta$ tends to $1$.

\begin{center}
    \includegraphics[width=\linewidth]{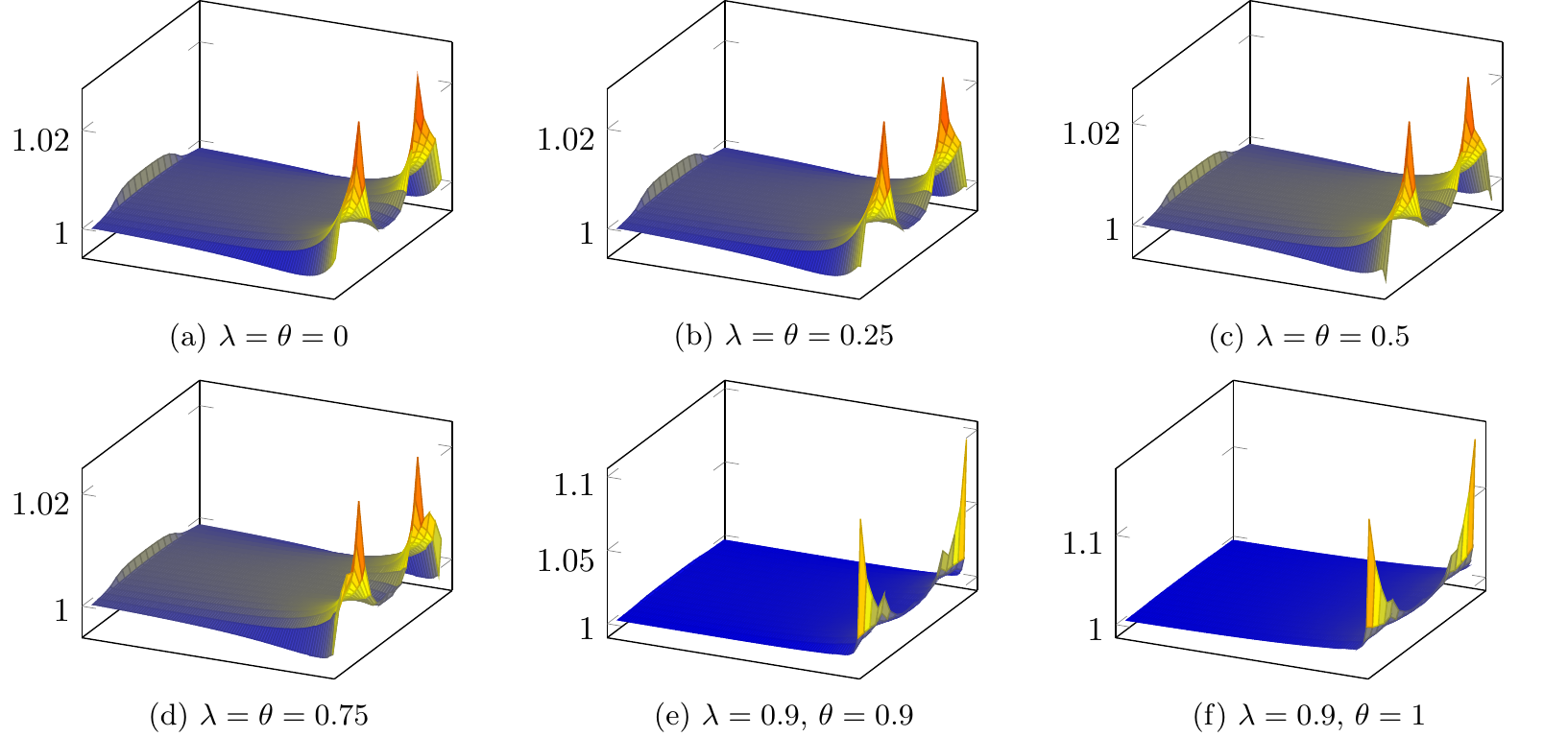}
    \captionof{figure}{     \label{fig:comp1D}   Example 2, $f_0(x)=1$.  The optimal feedback $\aa$    for different choices of $\ll$ and $\th$ at $t=T/2$.
    %    a) $\ll=\th=0$;    b) $\ll=\th=0.25$;    c) $\ll=\th=0.5$;    d) $\ll=0.5$, $\th=0.75$;   
     %   e) $\ll=0.75$, $\th=0.5$;    f) $\ll=\th=0.75$;    g) $\ll=0.75$, $\th=1$;    h) $\ll=0.95$, $\th=1$;
    }
\end{center}

\subsubsection{Queues}
\label{sec:queuing-effects}
Here, we choose $f$ as follows:
\begin{displaymath}
    f_0(x) =    \lc
    \begin{aligned}
        4 ,\quad&\text{ if } \quad x\leq -0.1,
        \\
        1 ,\quad&\text{ if } \quad x\geq 0.1,
        \\
        2.5-15x, \quad  &\text{ otherwise};
    \end{aligned}
    \right.
\end{displaymath}
hence,
an agent pays a cost for staying in $\Omega$ which  is higher in the left part of the domain. 

The parameters are 
\begin{displaymath}
 \nu=0.001,\qquad \tilde a=2,\qquad   \ll=0.95, \qquad\th=0.95,\qquad \rho=0.25, \qquad \omega_0=\frac{\pi}3. 
\end{displaymath}
The grid has $101 \times 21$ nodes and there are $101$ time steps.

The  evolution of the distribution of states and of the optimal feedback is displayed on Figure \ref{figure:queue}. We compare these results with a simulation of the MFG  obtained by cancelling either $\theta$ or $\lambda$  while keeping all the other parameters and the grid unchanged, see Figure \ref{fig:queueLQ}.  On Figure \ref{figure:queue}, we see that  a well distributed queue takes place from the entrance to the exit, by contrast with 
 Figure \ref{fig:queueLQ} where we see that the agent rush and accumulate in the right part of the domain. Similarly, the deceleration is much stiffer in the latter case. It is not surprising that the interactions through controls have the effect of smoothing the distribution of states and the optimal feedback law.
On the bottom of Figure \ref{fig:queueLQ}, we see that when $t$ is close to the horizon, the distribution is mainly concentrated near the middle of the domain but slightly on the right: this corresponds to the agents that have reached the zone where $f_0$ has the smaller value, i.e. $1$, but for which reaching the exit before $T$ becomes too costly. There is also a smaller bump near the entrance corresponding to agents that would pay  too high a cost to reach the right part of the domain before $T$. These phenomena are clearly a side effects due to the finite horizon. These side effects are also present on  Figure \ref{figure:queue}, but 
they are attenuated  by the  interaction through the controls.  
 \begin{remark}
   \label{sec:queues}
 In models accounting for congestion effects, the cost of motion depends on the density of the distribution of states and gets higher in the crowded regions.
We refer to \cite{Lions_video} for a pioneering discussion of MFG models including congestion, to \cite{MR3765549} for the  analysis of the system of PDEs that arise with these models, and to \cite{MR3821977} for numerical simulations. Such models also permit to describe queueing phenomena, because the agents located in crowded regions pay a large cost for moving. In \cite{MR3821977}, since the congestion effects are taken into account in a local manner, the queues take place in small regions of the state domain.  By contrast,  the present nonlocal model accounts for the fact that the  agents anticipate low speed regions, making the traffic more fluid and the distribution of state smoother. Although
 it is quite possible to do, we have not incorporated congestion effects in the present model. We plan to do it in forthcoming works. 
\end{remark}

\begin{center}
    \includegraphics[width=\linewidth]{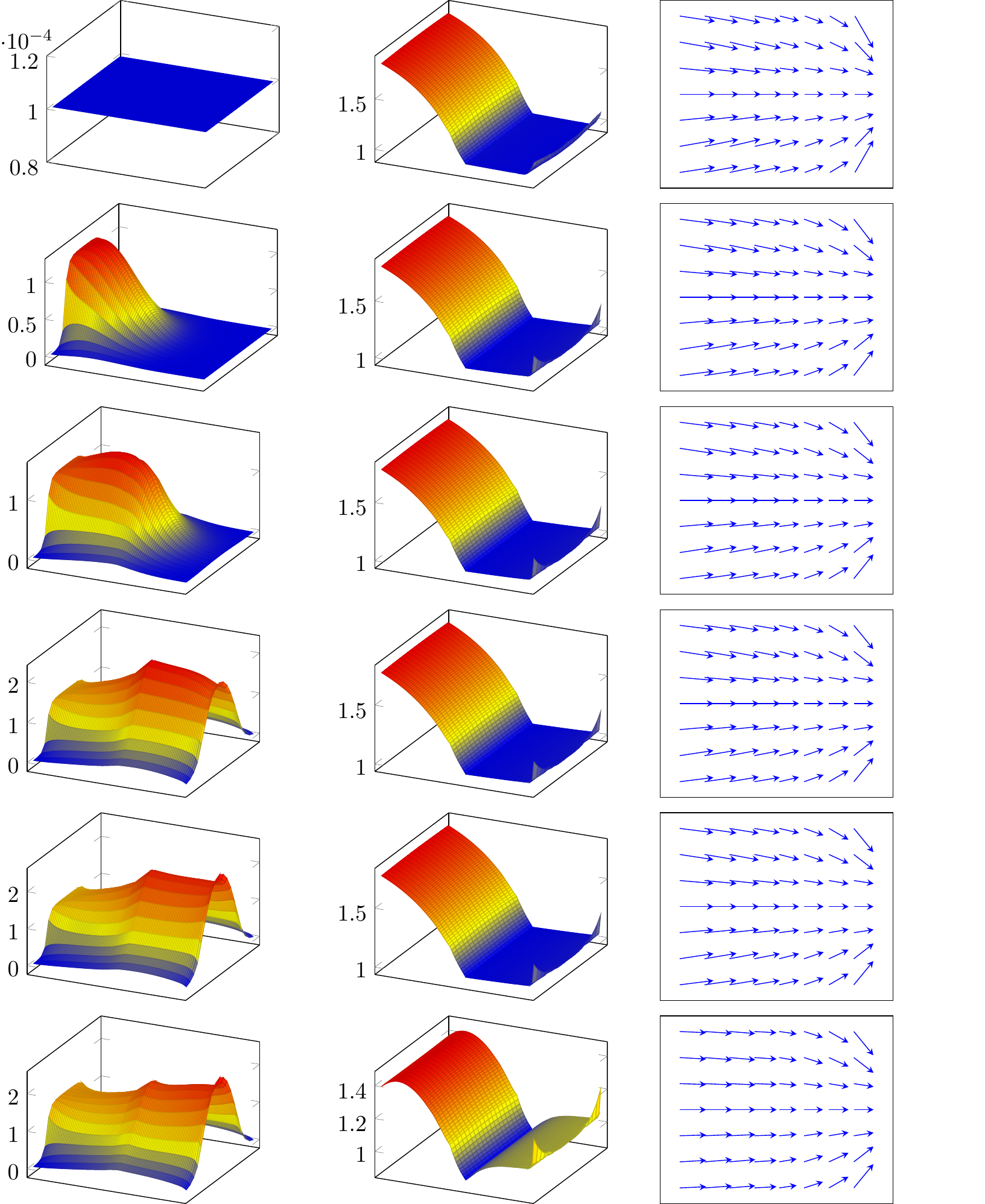}
    \captionof{figure}{ \label{figure:queue}
        Example 2.      Interaction via controls:  snapshots at  $t=0, 0.4, 0.8, 2, 4, 7$.      Left: the distribution   of states. Center: the norm of the optimal feedback. Right: the optimal feedback.    }
\end{center}

\begin{center}
    \includegraphics[width=\linewidth]{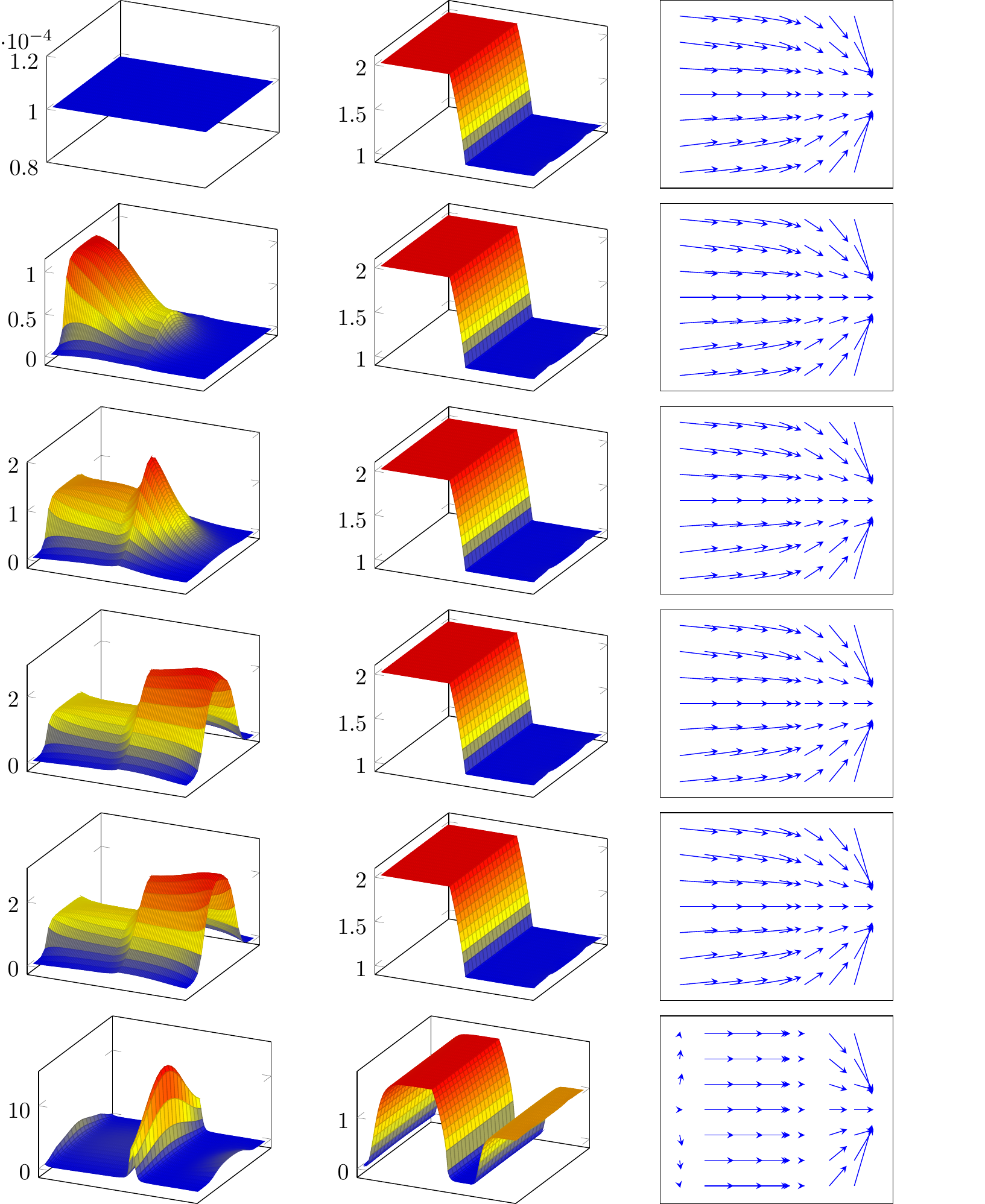}
    \captionof{figure}{\label{fig:queueLQ}      Example 2.      Same parameters except $\lambda \theta=0$:  snapshots at  $t=0, 0.4, 0.8, 2, 4, 7$.      Left: the distribution   of states. Center: the norm of the optimal feedback. Right: the optimal feedback. }
    \end{center}
    
\subsubsection{Stationary regime}
\label{sec:stationary-solutions}
We now look for a stationary equilibrium. For that, we use an iterative method in order to progressively diminish the effects of the initial and terminal conditions:  starting from $(u^0, m^0, V^0)$, the numerical solution of the finite horizon problem described above, we construct a sequence of approximate solutions $(u^\ell, m^\ell, V^\ell)_{\ell \ge 1}$ by the following induction:  $(u^{\ell+1}, m^{\ell+1}, V^{\ell+1})$ is the solution of the finite horizon problem with the same system of PDEs in $(0,T)\times \Omega$, the same boundary conditions on $(0,T)\times \partial \Omega$, and the  new initial and terminal conditions as follows:
\begin{eqnarray}
\label{eq:38}
u^{\ell+1}(T, x)&=& u^{\ell}\left(\frac T 2, x\right),\qquad x\in \Omega  ,\\
m^{\ell+1}(0, x)&=& m^{\ell}\left(\frac T 2, x\right),\qquad x\in \Omega  .
\end{eqnarray}
As $\ell$ tends to $+\infty$, we observe that  $(u^\ell, m^\ell, V^\ell)$ converge to time-independent functions. At the limit, we obtain a steady solution of 

\begin{eqnarray*}
        &
        - \nu\Delta u
        + \frac a 2 \labs \frac 1 a \nabla_xu-\ll\th V\rabs^2
        -a \frac{\ll^2\th}2\labs V\rabs^2
        = cm+f_0(x),\quad &\text{ in }
            \Omega,
        \\
        & 
        - \nu\Delta m
        -\divo\lp \lp \frac 1 a \nabla_xu
        -\ll\th V\rp m\rp
        =
        0,\quad &\text{ in }
            \Omega,
        \\
        &\ds V(x)
        =
        -    \frac 1 {Z(x)}\int_{\Omega}
        \lp \frac 1  a \nabla_xu(y)-\ll\th V(y)\rp K(x,y)
dm(y),  &\text{ in }
            \Omega,
        \\
        &Z(x)
        =
        \int_{\Omega}
        K(x,y)dm(y),
 &\text{ in }
            \Omega,
          \end{eqnarray*}
          
% \begin{equation*}
%     \lc
%     \begin{aligned}
%             & \nu\Delta u
%             +H\lp x,\nabla_xu(x),\mu\rp
%             = f(x,m)
%             &\text{ in }
%             \Omega,
%             \\
%             & - \nu\Delta m
%             -\divo\lp H_p\lp x,\nabla_xu(x),\mu\rp m\rp
%             =0
%             &\text{ in }
%             \Omega,
%             \\
%             &\mu
%             = \Bigl(
%             I_d,
%             -H_p\lp \cdot,\nabla_xu,\mu\rp
%             \Bigr){\#}m.
%         \end{aligned}
%         \right.
% \end{equation*}
with the boundary conditions on $\partial\Omega$  given by (\ref{eq:32})-(\ref{eq:34}).

\begin{center}
    \includegraphics[width=\linewidth]{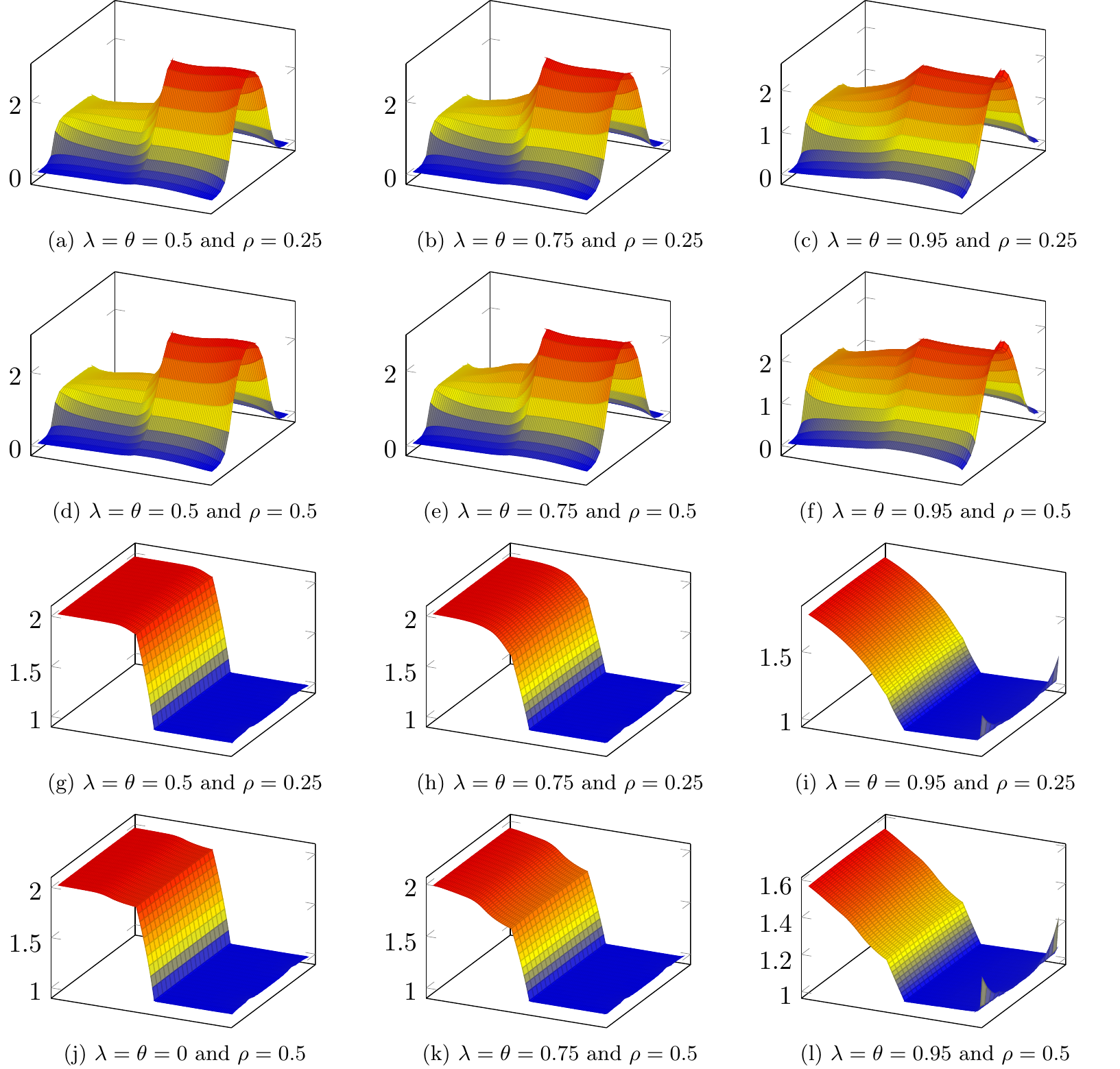}
    \captionof{figure}{\label{fig:stat}}
    Example 2.  Solutions of the stationnary problem for different choices
    of $\ll,\th$ and $\rho$.      Top six figures:  the distribution of states $m$.   Bottom six figures: norm of the optimal controls.
      %Example 2.      Same parameters except $\lambda \theta=0$:  snapshots at  $t=0, 0.4, 0.8, 2, 4, 7$.      Left: the distribution   of states. Center: the norm of the optimal feedback. Right: the optimal feedback. }
    \end{center}

%We aim at finding a stationnary solution of 

\begin{acknowledgement}
  This research was partially supported by the ANR
  (Agence Nationale de la Recherche) through
MFG project ANR-16-CE40-0015-01.
\end{acknowledgement}

\bibliographystyle{plain}
\bibliography{MFGbiblio}

\end{document}